\def\H{\mathcal{H}}
\def\EE{\mathcal{EE}}
\def\N{\mathbb{N}}
\def\P{\mathbb{P}}
\def\p{\mathbf{p}}
\def\E{\mathbb{E}}
\def\EE{\mathcal{E}}
\def\R{\mathbb{R}}
\def\t{\textrm}
\def\d{\textrm{d}}
\def\w{\widetilde}
\def\ind{{\mathchoice {\rm 1\mskip-4mu l} {\rm 1\mskip-4mu l}
{\rm 1\mskip-4.5mu l} {\rm 1\mskip-5mu l}}}
\newcommand{\be} {\begin{equation}}
\newcommand{\ee} {\end{equation}}
\newcommand{\bea} {\begin{eqnarray}}
\newcommand{\eea} {\end{eqnarray}}
\newcommand{\Bea} {\begin{eqnarray*}}
\newcommand{\Eea} {\end{eqnarray*}}
\newtheorem{Thm}{Theorem}
\newtheorem{Lem}[Thm]{Lemma}
\newtheorem{Pte}[Thm]{Proposition}
\newtheorem{Prop}[Thm]{Proposition}
\theoremstyle{definition} 
\theoremstyle{definition} 
\theoremstyle{definition} 
\theoremstyle{remark}\newtheorem{Rque}{Remark}
\begin{document}
\title{Large deviations for Branching Processes \\ in Random Environment}
\author{Vincent Bansaye, Julien Berestycki}
\maketitle \vspace{1.5cm}

\begin{abstract}
\noindent A branching process in random environment $(Z_n ,n
\in  \N)$ is a generalization of Galton Watson processes where at
each generation the reproduction law is picked randomly. In this
paper we give several results which belong to the class of {\it
large deviations}. By contrast to the Galton-Watson case, here
random environments and the branching process can conspire to
achieve atypical events such as $Z_n \le e^{cn}$ when $c$ is
smaller than the typical geometric growth rate $\bar L$ and $ Z_n
\ge e^{cn}$ when $c > \bar L$.

One way to obtain such an atypical rate of growth is to have a
typical realization of the branching process in an atypical
sequence of environments. This gives us a general lower bound for
the rate of decrease of their probability.

When each individual leaves at least one offspring in the next
generation almost surely, we compute the exact rate function of
these events and we show that conditionally on the large deviation
event, the trajectory $t \mapsto \frac1n \log Z_{[nt]}, t\in [0,1]$
converges to a deterministic function $f_c :[0,1] \mapsto \R_+$ in
probability in the sense of the uniform norm. The most interesting
case is when $c < \bar L$ and we authorize individuals to have only
one offspring in the next generation. In this situation,
conditionally on $Z_n \le e^{cn}$, the population size stays fixed
at 1 until a time  $ \sim n t_c$. After time $n t_c$ an atypical
sequence of environments let $Z_n$ grow with the appropriate rate ($
\neq \bar L$) to reach $c.$ The corresponding map $f_c(t)$ is
piecewise linear and is 0 on $[0,t_c]$ and $f_c(t) =
c(t-t_c)/(1-t_c)$ on $[t_c,1].$
\end{abstract}

{\em AMS 2000 Subject Classification.} 60J80, 60K37, 60J05,  92D25

{\em Key words and phrases.} Branching processes, random
environments, large deviations.

\clearpage

\section{Introduction}

Let $\mathcal P$ be the space of probability measures on the integer, that is
$$
\mathcal P  := \{ p : \N \mapsto [0,1] : \sum_{k\geq 0} p(k)=1 \},
$$
and denote by $m(p)$ the mean of $p$ :
$$m(p)=\sum_{k\geq 0} k p(k).$$

A branching process in random environment (BPRE for short) $(Z_n, n
\in \N)$ with environment distribution $\mu \in
\mathcal{M}_1(\mathcal \P)$ is a discrete time Markov process which
evolves as follows : at time $n,$  we draw $\p$ according to $\mu$
independently of the past and then each individual $i=1, \ldots ,
Z_n$ reproduces independently according to the same $\p,$ i.e. the
probability that individual $i$ gives birth to $k$ offsprings in the
next generation is $\p(k)$ for each $i$. We will denote by
$\P_{z_0}$ the distribution probability of this process started from
$z_0$ individuals. When we write $\P$ and unless otherwise
mentioned, we mean that the initial state is equal to $1$.

\medskip

Thus, we consider an i.i.d. sequence of  random environment
$(\p_i)_{i\in\N}$ with common distribution $\mu$. Traditionally, the
study of BPRE has relied on analytical tools such as generating
functions. More precisely, denoting by $f_i$ the probability
generating function of $\p_i$, one can note that the BPRE $(Z_n, n
\in \N)$ is characterized by the relation
$$\E\big(s^{Z_{n+1}}\vert Z_0,\dots,Z_n; \ f_0,\dots,f_n\big)=f_n(s)^{Z_n} \qquad (0\leq s\leq 1, \ n\geq 0).$$
For classical references on these processes see \cite{afa, at, atr,
AN, bpree, bpre}.

\medskip

A good picture to keep in mind when thinking of a BPRE is the
following : consider a population of plants which have a one year
life-cycle (so generations are discrete and non-overlapping). Each
year the climate or weather conditions (the environment) vary  which
impacts the reproductive success of the plant. Given the climate,
all the plants reproduce according to the same given mechanism. In
this context, $\mu$ can be thought of as the distribution which
controls the successive climates, which are supposed to be iid, and
the plant population then obeys a branching process in random
environment. By taking a Dirac mass for $\mu$ we recover the
classical case of Galton Watson processes.

\medskip

At least intuitively one easily sees that some information on the
behavior of the BPRE $Z_n$ can be read from the process $M_n=
\Pi_1^n m(\p_i)$  and that their typical behavior should be similar
: $$Z_n\approx M_n, \qquad (n\in\N).$$ Hence the following dichotomy
is hardly surprising: A BPRE is supercritical (resp. critical, resp.
subcritical) if the expectation of  $\log(m(\p))$ with respect to
$\mu$ the law of the environments : $$\E(\log(m(\p))),$$ is positive
(resp. zero, resp. negative). In the supercrticial case, the BPRE
survives with a positive  probability, in the critical and
subcritical case, it becomes extinct a.s.

Moreover, in the supercritical case, we have the following
expected result \cite{atr, Guiv}. Assuming that  $\E(\sum_{k\in\N}
k^s\p(k)/m(\p))<\infty$ for some $s>1$, there exists a finite
r.v. $W$ such that
$$M_n^{-1} Z_n \stackrel{n\rightarrow \infty}{\longrightarrow } W, \qquad \P(W>0)=\P(\forall n, Z_n>0).$$
which ensures that conditionally on the non-extinction of $(Z_n)_{n\in\N}$
$$\log(Z_n)/n \rightarrow  \E(\log(m(\p))) \quad \t{a.s}.$$
This result is  a generalization in random environment of the well
known Kesten-Stigum Theorem for Galton- Watson processes : let $N$
be the reproduction law of the GW process $(Z_n, n \ge 0)$ and let $m=\E(N)$ be its mean.
Assume that $E (N \log_+ N)<\infty$, then
$$W_n:=Z_n/(m^n) \stackrel{n\to \infty}{\longrightarrow } W, \qquad \P(W>0)=\P(\forall n, Z_n>0).$$
The distribution of $W$ is completely determined by that of $N$ and
a natural question concerns the tail behavior of $W$ near 0 and
infinity. Results in this direction can be found for instance in
\cite{BB, Dubuc, Dubuc2, Rouault} for the Galton Watson case and
\cite{Hambly} for the BPRE case. In a large deviation context, the
tail behavior of $W$ can be related to event where $Z_n$ grows with
an atypical rate. Another way to study such events is to consider
the asymptotic behavior of $Z_{n+1}/Z_n.$ This is the approach taken
in \cite{atld} to prove that $\vert W_n-W \vert$ decays
supergeometrically when $n \to \infty$, assuming that $\P(N=0)=0$.
Yet another approach is the study of so-called moderate deviations
(see \cite{Ney} for the asymptotic behavior of $\P(Z_n=v_n)$ with
$v_n=O(m^n)$).

Finally, we observe that Kesten Stigum Theorem for Galton Watson
processes can be reinforced into the following statement:
$$ (t \mapsto \frac1n \ln Z_{[nt]} , t\in[0,1] ) \Rightarrow (t \mapsto t \log (m), t\in [0,1]).$$
in the sense of the uniform norm almost surely (see for instance
\cite{Morters} for this type of trajectorial results, unconditioned
and conditioned on abnormally low growth rates).

\vspace{1cm}

In this work we will consider large deviation events for BPREs
$A_c(n), c \ge 0$ of the form
$$A_c(n) = \left\{ \begin{array}{c} \{0< \frac1n \log Z_n \le c \} \text{ for } c< \E(\log(m(\p)) \\ \{ \frac1n \log Z_n \ge c \} \text{ for } c>
\E(\log(m(\p)) \end{array} \right. ,$$ and we are interested in how
fast the probability of such events is decaying. More precisely, we
are interested in the cases where
$$-\frac1n \log (\P(A_c(n))) \to \chi(c), \qquad \t{with} \  \ \chi(c)<\infty.$$

Let us discuss very briefly the Galton-Watson case first (see
\cite{Fleischmann, Morters, Rouault}). Assume first that the Galton Watson process
is supercritical ($m:=\E(N) > 1$) and and that all the moments of
the reproduction law are finite. If we are in the B\"{o}ttcher case
($\P(N \le 1)=0$) then there are no large deviations, i.e.
$$c\ne \log m \ \Rightarrow \ \phi(c)=\infty.$$ If, on the other
hand, we are in the Schr\"{o}dder case ($\P(N=1)>0$) then $\phi(c)$
can be non-trivial for $c \le \log m.$ This case is discussed in
\cite{Morters} (see also \cite{Fleischmann} for finer results for
lower deviations) where it is shown that to achieve a
lower-than-normal rate of growth $c \le \log m$ the process first
refrains from branching for a long time until it can start to
grow at the normal rate $\log m$ and reach its objective. More
precisely, it is a consequence of Theorem \ref{T:trajectoire
gauche} below that conditionally on $Z_n \le e^{cn}$, $$ (\frac1n
\log (Z_{[nt]}) , t\in [0,1]) \to (f(t) , t\in[0,1])$$ in
probability in the sense of uniform norm, where $f(t) =  \log(m).
(t - (1- c/\log(m)))_+.$ When the reproduction law has infinite
moments, the rate function $\phi$ is non-trivial for $c \ge \log
m.$ In the critical or subcritical case, there are no large
deviations.

\medskip

We will see that the situation for BPRE differs in many aspects
from that of the Galton-Watson case: for instance the rate
function is non-trivial as soon as  $m(\p)$ is not constant and
more than $1$ with positive probability. This is due to the fact
that we can deviate following an atypical sequence of
environments, as explained in the next Section, and as already
observed by Kozlov for upper values in the supercritical case
\cite{kozld}. When we condition by $Z_n \le e^{cn}$ and we assume
$\P(Z_1 =1)>0$ the process $(\frac1n \log (Z_{[nt]}) , t\in
[0,1])$ still converges in probability uniformly to a function
$f_c(t)$ which has the same shape as $f$ above, that is there
exists $t_c  \in [0,1]$ such that $f_c(t)=0$ for $t \le t_c$ and
then $f_c$ is linear and reach $c$, but the slope of this later
piece can now differs from the typical rate $ \E(\log m(\p))$.


\section{Main results}
\label{S:main results}

Denote by  $(L_i)_{i\in\N}$ the sequence of
iid log-means of the successive environments,
$$L_i:=\log(m(\p_i)), \qquad S_n:=\sum_{i=0}^{n-1} L_i,$$
and
$$ \bar L:= \E(\log (m(\p))) = \E(L).$$

Define $\phi_L(\lambda):=\log (\E(\exp(\lambda L)))$ the Laplace transform of $L$
and let $\psi$ be the large deviation function associated with
$(S_n)_{n\in\N}$:
$$\psi(c)=\sup_{\lambda \in \R}\{ c \lambda-\phi_L(\lambda) \}.$$
We briefly recall some well known fact about the rate function $\psi$ (see \cite{dembo zeitouni} for a classical reference on the matter). The map $x \mapsto \psi(x)$ is strictly convex and $C^{\infty}$ in the interior of the set $\{ \Lambda'(\lambda), \lambda \in \mathcal{D}^o_{\Lambda} \}$  where $\mathcal{D}_{\Lambda} = \{\lambda : \Lambda(\lambda) <\infty\}.$ Furthermore, $\psi(\bar L)= 0$, and $\psi$ is decreasing (strictly) on the left of $\bar L$ and increasing (strictly) on its right.

The map $\psi$ is called the rate  function for the following
large deviation principle associated with the random walk $S_n$.
We have for every $c\leq  \bar{L}$, \be \label{ldS1}
  \lim_{n\rightarrow \infty}-
\log(\P(  S_n/n  \leq c ) /n =  \psi(c),
\ee
and for every $c\geq \bar{L}$
\be
\label{ldS2}
\lim_{n\rightarrow \infty} -
\log(\P( S_n/n \geq c  ) /n  =
 \psi(c).
\ee
$\newline$

Roughly speaking,  one way to get
$$ \log(Z_n)/n \in O \qquad (n\rightarrow \infty)$$
is to follow  environments with a good sequence of reproduction law :
$$\log(\Pi_{i=1}^n m(\p_i))/n=S_n/n \in O.$$
We have then the following upper bound for the rate function for
any BPRE under a moment condition analogue to that used in
\cite{Guiv}. The proof is deferred to the next section.

\begin{Prop} \label{P:minoration}  Assuming that  $\E(\sum_{k\in\N} k^s\p(k)/m(\p))<\infty$ for some $s>1$, then for
every $z_0$ :
\begin{itemize}
\item[-] $\forall c \le \bar L$ $$  \limsup_{n\rightarrow \infty}
- \frac1n \log \P_{z_0} ( \log(Z_n)/n \le c )  \leq \ \psi(c).$$
\item[-] $\forall c \ge \bar L$ $$  \limsup_{n\rightarrow \infty}
- \frac1n \log \P_{z_0} ( \log(Z_n)/n \ge c )  \leq \ \psi(c).$$
\end{itemize}
\end{Prop}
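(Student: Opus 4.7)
The proof follows the ``atypical environment'' heuristic described in the introduction: we lower-bound the event $\{\log Z_n/n \le c\}$ (resp.\ $\{\log Z_n/n \ge c\}$) by the joint event that $S_n/n \approx c$ and that, conditionally on the environment, $Z_n$ behaves typically, i.e.\ $Z_n \approx e^{S_n}$. The key ingredient invoked from \cite{Guiv} via the moment hypothesis is that $W_n := Z_n/e^{S_n}$ is a uniformly integrable martingale converging almost surely to a limit $W$ with quenched mean $\E(W \mid \p_0, \p_1, \ldots) = z_0$ and $\{W > 0\}$ coinciding a.s.\ with $\{\forall n,\ Z_n > 0\}$.

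For $c \le \bar L$, fix $\epsilon > 0$ and set $B_n := \{S_n/n \le c - \epsilon\}$. The Cram\'er lower bound \eqref{ldS1} combined with continuity of $\psi$ gives $\P(B_n) \ge \exp(-n(\psi(c-\epsilon) + o(1)))$. Conditionally on $\p_0, \ldots, \p_{n-1}$, Markov's inequality applied to $Z_n$ (whose quenched mean is $z_0 e^{S_n}$) yields $\P(Z_n \ge e^{cn} \mid \p_0, \ldots, \p_{n-1}) \le z_0 e^{S_n - cn} \le z_0 e^{-n\epsilon}$ on $B_n$. Hence $\P_{z_0}(\log Z_n/n \le c) \ge \P_{z_0}(Z_n < e^{cn},\, B_n) \ge (1 - z_0 e^{-n\epsilon}) \P(B_n)$, so $\limsup_n -\frac1n \log \P_{z_0}(\log Z_n/n \le c) \le \psi(c - \epsilon)$; letting $\epsilon \to 0^+$ and using continuity of $\psi$ on $(-\infty, \bar L]$ closes this case. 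Note that no moment assumption beyond finiteness of $m(\p)$ is used in this half.

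For $c \ge \bar L$, fix $\epsilon > 0$ and set $B'_n := \{S_n/n \ge c + \epsilon\}$, which satisfies $\P(B'_n) \ge \exp(-n(\psi(c+\epsilon) + o(1)))$ by \eqref{ldS2}. The plan is to produce a quenched lower bound on $\P(Z_n \ge e^{cn} \mid \p_0, \ldots, \p_{n-1})$ that holds on a non-negligible subset of $B'_n$. By Guivarc'h's theorem, for $\mu^{\otimes \N}$-a.e.\ environment in the supercritical regime, $\P(W > 0 \mid \text{env}) > 0$; choose $\eta > 0$ small enough that the set $G := \{\text{env}: \P(W \ge \eta \mid \text{env}) \ge \eta\}$ has positive $\mu^{\otimes \N}$-measure. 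Using i.i.d.\ environments and the fact that $W$ can be approximated by $W_M$ (which depends only on $\p_0, \ldots, \p_{M-1}$) for large but fixed $M$, one obtains a decoupling yielding $\P(G \cap B'_n) \ge c_0 \P(B'_n)$ for some $c_0 > 0$ and all $n$ large. On $G \cap B'_n$, the quenched convergence $Z_n/e^{S_n} \to W \ge \eta$ gives, with quenched probability at least $\eta/2$, the inequality $Z_n \ge (\eta/2) e^{S_n} \ge (\eta/2) e^{n(c+\epsilon)} \ge e^{cn}$ for $n$ large. Assembling, $\P_{z_0}(\log Z_n/n \ge c) \ge (c_0 \eta/2) \P(B'_n)$, and letting $\epsilon \to 0^+$ concludes.

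\textbf{The main obstacle} lies in the last step, namely decoupling the ``favorability'' event $G$ from the large-deviation event $B'_n$: since $W$ depends on the entire environment sequence (including the first $n$ coordinates defining $S_n$), independence is not free. Two ways to overcome this are (i) a truncation/restart argument \`a la Kozlov \cite{kozld}, approximating $W$ by the finite-horizon martingale $W_M$ and then exploiting the i.i.d.\ structure, or (ii) a quenched Paley--Zygmund estimate using the $s$-moment hypothesis to control the ratio $\E(W^s \mid \text{env})/z_0^s$. Either approach, combined with the Cram\'er estimates and the martingale convergence above, delivers the two claimed inequalities.
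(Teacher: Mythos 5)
Your first bullet ($c\le\bar L$) is correct and is actually more elementary than the paper's argument: the quenched Markov bound $\P(Z_n\ge e^{cn}\mid \p_0,\dots,\p_{n-1})\le z_0e^{S_n-cn}$ on $\{S_n\le (c-\epsilon)n\}$, combined with the Cram\'er lower bound for $S_n$ and continuity of $\psi$, does give $\limsup_n-\frac1n\log\P_{z_0}(Z_n\le e^{cn})\le\psi(c)$ with no moment hypothesis. (Be aware, though, that the paper proves the stronger two-sided localization $\P_{z_0}(c-\epsilon\le \log Z_n/n\le c+\epsilon)\ge e^{-n(\psi(c)+o(1))}$, which excludes extinction; your Markov bound only controls $\{Z_n< e^{cn}\}$ including $Z_n=0$, so it proves the literal statement but not the refinement the authors actually use later.)

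The second bullet ($c\ge\bar L$) has a genuine gap, and it sits exactly where you flag it: the claim $\P(G\cap B_n')\ge c_0\P(B_n')$ is asserted, not proved, and it is the whole difficulty. Conditionally on $B_n'=\{S_n/n\ge c+\epsilon\}$ the first $n$ environments are no longer distributed according to $\mu^{\otimes n}$ but according to an exchangeable tilted law, and there is no a priori reason why the ``good'' set $G$ (defined through the quenched law of $W$, which depends on those same coordinates) keeps positive mass under this conditioning; a finite-horizon approximation $W\approx W_M$ does not decouple, because $W_M$ depends on $\p_0,\dots,\p_{M-1}$ which are also tilted by $B_n'$, and a quenched Paley--Zygmund bound would require uniform control of $\E(Z_n^s\mid\text{env})/\E(Z_n\mid\text{env})^s$ over the tilted environments, which the hypothesis $\E(\sum_k k^s\p(k)/m(\p))<\infty$ (an annealed condition, possibly with $s<2$) does not supply. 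A second, related problem: the a.s.\ convergence $Z_n/e^{S_n}\to W$ is not uniform in the environment, so ``with quenched probability at least $\eta/2$, $Z_n\ge(\eta/2)e^{S_n}$ for $n$ large'' cannot be applied on the $n$-dependent event $G\cap B_n'$. The paper avoids all of this by an exponential change of measure on the environment law, $\widetilde\P(\p\in\d p)=m(p)^{\lambda_c}\P(\p\in\d p)/\E(m(\p)^{\lambda_c})$ with $\phi_L'(\lambda_c)=c$: under $\widetilde\P$ the environments remain i.i.d., the walk has drift exactly $c$, the BPRE is a \emph{typical} supercritical one to which the Kesten--Stigum-type theorem of Guivarc'h--Liu applies (this is where the $s$-moment hypothesis enters), so $\widetilde\P_{z_0}(c-\epsilon\le\log Z_n/n\le c+\epsilon)\to\widetilde p>0$, and the Radon--Nikodym identity $\E_{z_0}(f(Z_n))=\E(m(\p)^{\lambda_c})^n\,\widetilde\E_{z_0}(e^{-\lambda_c S_n}f(Z_n))$ together with Fatou's lemma converts this into the claimed bound, treating both bullets at once and with no conditioning to decouple. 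To complete your proof you would need to carry out one of your two sketched remedies in full; as written, the upper-deviation half is not a proof.
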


As Theorem \ref{T:trajectoire gauche} below shows, the inequality may be strict. Moreover, this proves that even in the subcritical case, there may be large deviations, contrary to what happens in the Galton Watson case. More precisely, as soon as $\P(m(\p)>1)>0$ and $m(\p)$ is not constant almost surely, the rate function $\psi$  is non trivial on $(0,\infty)$.

\subsection{Lower deviation in the strongly supercritical case.}
\label{lowerLD}
We focus here on the so-called {\it strongly supercritical} case
$$\P(\p(0)=0)=1$$
(in which the environments are almost surely supercritical). Let us  define for every $c \le \bar L$, \Bea \chi  (c) &:=& \inf_{
t \in [0,1]} \{ - t \log (\E(\p (1))) + (1-t) \psi(c/(1-t))\}. \Eea
It is quite easy to prove that this infimum is reached at a unique
point $t_c$ (see Lemma \ref{L:def de chi}):
$$\chi(c) = -t_c \log (\E(\p (1)))+ (1-t_c) \psi(c/(1-t_c)).$$
and that $t_c \in[0,1-c/\bar{L}]$. We can thus define the function
$f_c : [0,1] \mapsto \R_+$ for each $c < \bar L$ as follows (see
figure $1$):
$$f_c(t):= \left\{\begin{array}{ll}  0, \qquad &  \t{if} \ t\leq t_c \\
 \frac{c}{1-t_c}(t-t_c), \quad & \t{if} \ t\geq t_c.
\end{array}
\right.
$$

\begin{figure}[h]\label{F:fc}
\begin{center}
\input{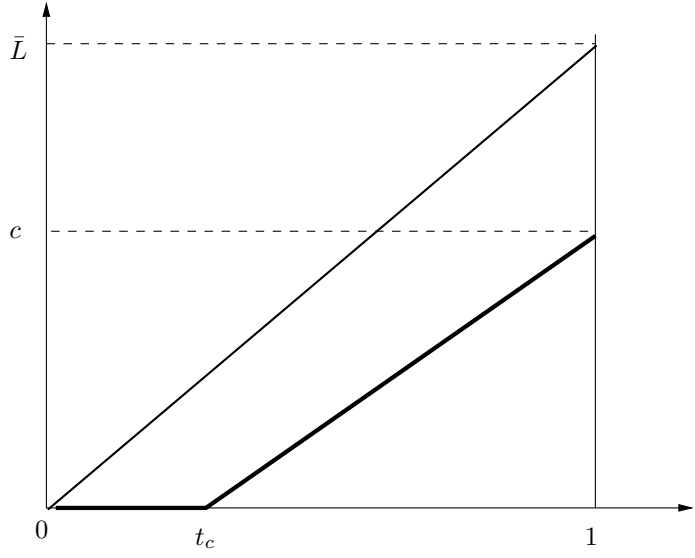} \caption{ The function $t \mapsto f_c(t)$ for
$c \le \bar L.$ }
\end{center}
\end{figure}


We will need the following moment assumption $\H$.
\begin{align} \tag{$\H$}
\left\{  \begin{array}{c} \exists A>0  \t{ s.t.  } \mu(m(\p)>A) =0 , \\ \exists B>0  \t{ s.t.  } \mu(\sum_{k\in\N}
k^2\p(k)>B) =0 \end{array}  \right\}
\end{align}
Observe that the condition in Proposition \ref{P:minoration} ($\exists s>1$ such that $\E(\sum_{k\in\N} k^s\p(k)/m(\p))<\infty$) is included in $(\H)$. \\

The main result is the following theorem which gives the large deviation cost of $Z_n \le \exp(cn)$ and the
asymptotic trajectory behavior of $Z_n$ when conditioned on $Z_n \le \exp(cn).$ \\

\begin{Thm}\label{T:trajectoire gauche}
\begin{itemize} Assuming that $\P(\p(0) =0)=1$ and the hypothesis $\H$ we have
 \item[(a)] If  $\mu(\p(1)>0)>0$, then for every $c < \bar{L}$,
$$- \log (\P(Z_n \leq e^{ c n }))/n   \ \stackrel{n\rightarrow \infty}{\longrightarrow }  \chi(c),$$
 and furthermore, conditionally on   $Z_n \leq e^{ c n }$,
$$\sup_{t\in[0,1]} \{ \big\vert \log(Z_{[tn]})/n- f_c(t) \big\vert  \} \ \stackrel{n\rightarrow \infty}{\longrightarrow }0, \qquad \t{in} \ \P.$$

\item[(b)] If   $\mu(\p(1)>0)=0$,  then for every $c < \bar{L}$,
$$- \log (\P(Z_n <e^{ c n }))/n   \ \stackrel{n\rightarrow \infty}{\longrightarrow } \psi(c),$$
and furthermore  for every  $ \inf\{supp  \log(m(\p))\}<c<\bar L $,  conditionally on   $Z_n \leq e^{ c n }$,
$$\sup_{t\in[0,1]} \{ \big\vert \log(Z_{[tn]})/n- ct \big\vert   \}\
\stackrel{n\rightarrow \infty}{\longrightarrow }0, \qquad \t{in} \ \P.$$
\end{itemize}
\end{Thm}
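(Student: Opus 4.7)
Set $\tau := \inf\{k \ge 0 : Z_k \ge 2\}$. Since $\P(\p(0)=0)=1$, every individual has at least one offspring, so $Z_k = 1$ for $k < \tau$, and conditionally on $\tau$ and $Z_\tau$ the shifted process $(Z_{\tau+j})_{j\ge 0}$ is a BPRE driven by the fresh environments $(\p_{\tau+i})_{i\ge 0}$ started from $Z_\tau \ge 2$. The elementary identity
$$\P(\tau > k) = \E\Bigl[\prod_{i=0}^{k-1}\p_i(1)\Bigr] = (\E[\p(1)])^k$$
accounts for the contribution $-t\log\E(\p(1))$ in $\chi(c)$ when $\tau \approx tn$. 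In case (b), $\mu(\p(1)>0)=0$ forces $\tau = 1$ deterministically, reducing the analysis to the random walk LDP \eqref{ldS1} for $(S_n)$, immediately yielding the rate $\psi(c)$ and the linear trajectory.

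\textbf{Upper bound on $\P(Z_n \le e^{cn})$.} Decompose $\P(Z_n \le e^{cn}) = \sum_{k=0}^n \P(\tau = k,\, Z_n \le e^{cn})$. On $\{\tau = k\}$ the restarted BPRE satisfies $\E[Z_m \mid \text{env}, Z_k] = Z_k e^{S_m - S_k}$, and the martingale $Z_m e^{-(S_m-S_k)}$ converges in $L^2$ to a positive limit $W_\infty$ uniformly in the environment, thanks to the quenched second-moment bound supplied by $\H$ ($\sum k^2 \p(k) \le B$). Consequently $\{Z_n \le e^{cn}\} \cap \{\tau = k\}$ forces $S_n - S_k \le (c + \eta)n$ outside an event of smaller-order probability, so
$$\P(\tau = k,\,Z_n \le e^{cn}) \le (\E[\p(1)])^k \exp\bigl(-(n-k)\psi(cn/(n-k)) + o(n)\bigr)$$
by \eqref{ldS1}. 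With $k=[tn]$ the summand becomes $\exp(-n[-t\log\E(\p(1)) + (1-t)\psi(c/(1-t))] + o(n))$, and summing over $k$ produces the exponential rate $\chi(c)$.

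\textbf{Matching lower bound.} Let $t_c$ be the unique minimizer of the functional defining $\chi(c)$ (Lemma \ref{L:def de chi}). For $\epsilon > 0$, build the event $E_\epsilon$ on which (i) $\tau = [t_c n]$, of probability $\asymp (\E[\p(1)])^{t_c n}$; (ii) the post-escape environments satisfy $(S_n - S_\tau)/n \in [c-\epsilon, c]$, of probability $\asymp \exp(-(1-t_c)n \psi(c/(1-t_c)))$ by \eqref{ldS1}; (iii) the re-started martingale $Z_k e^{-(S_k-S_\tau)}$ stays bounded by $e^{\epsilon n}$, of probability uniformly bounded below under $\H$. On $E_\epsilon$ we have $Z_n \le e^{(c+\epsilon)n}$, so $\P(Z_n \le e^{(c+2\epsilon)n}) \ge \exp(-n\chi(c) - o(n))$; sending $\epsilon \to 0$ after interchanging limits gives the desired rate.

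\textbf{Trajectorial convergence and main obstacle.} Strict convexity of the functional defining $\chi(c)$ implies that $\tau/n \to t_c$ in probability under the conditioning $\{Z_n \le e^{cn}\}$, since any deviation $|\tau/n - t_c| \ge \delta$ incurs a cost $\chi(c) + \eta(\delta)$ strictly larger than $\chi(c)$. Conditional on $\tau/n \approx t_c$, the post-escape random walk $(S_{\tau + j} - S_\tau)_{0 \le j \le n-\tau}$ is constrained to reach approximately $cn$ at time $n - \tau$, and a Mogulskii-type trajectorial LDP gives uniform convergence of $(S_{\tau + [s(n-\tau)]} - S_\tau)/n$ to the linear path $s \mapsto sc$ on $[0,1]$. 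Combined with the quenched convergence $Z_{\tau+j}/e^{S_{\tau+j}-S_\tau} \to W_\infty > 0$ in probability, this yields $\frac{1}{n}\log Z_{[tn]} \to f_c(t)$ uniformly in $t \in [0,1]$. \emph{Main obstacle.} The most delicate point is proving this last quenched convergence \emph{uniformly} over the atypical environments encountered on the conditioning event: both the explicit construction in the lower bound and the trajectorial statement require that $Z_n / e^{S_n}$ be tight and bounded away from $0$ in probability, independently of the realized environment sequence. Hypothesis $\H$ (bounded $m(\p)$ and bounded $\sum k^2 \p(k)$) is used in full to close this $L^2$ control, without which the bootstrapping from "stay at $1$, then grow along atypical environments" to the trajectory $f_c$ would fail.
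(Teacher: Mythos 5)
Your overall strategy --- decompose at the time the population escapes a low level, price the ``stay small'' phase by $\E(\p(1))$ per generation and the growth phase by the random-walk rate $\psi$, then optimize over the split point --- is exactly the strategy of the paper, and your lower bound (stay at $1$ until $[t_c n]$, then ask for an atypical environment sequence, with a quenched Markov bound to keep $Z$ below $e^{S}e^{\epsilon n}$) is essentially the paper's. However, two steps of the upper bound, which you yourself flag as the delicate part, do not work as written.

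First, decomposing at $\tau=\inf\{k: Z_k\ge 2\}$ is not enough. After $\tau$ the population is only $2$, and from a population of $2$ the process can stagnate at a bounded level for a further macroscopic stretch of time; nothing forces it to track $e^{S_n-S_\tau}$. Hence the claimed bound $\P(\tau=k,\,Z_n\le e^{cn})\le (\E\p(1))^k\exp(-(n-k)\psi(cn/(n-k))+o(n))$ is unjustified: the event on the left also contains paths that reach $2$ and then linger. The paper instead decomposes at $\tau(N)=\inf\{k: Z_k>N\}$ for a \emph{large} $N$, and needs a separate statement (Lemma \ref{L:staying bounded}) showing that keeping the population below any fixed $N$ has the same exponential cost $\log\E(\p(1))$ as keeping it at $1$; this is what makes the decomposition close.

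Second, and more seriously, the assertion that $\{Z_n\le e^{cn}\}\cap\{\tau=k\}$ forces $S_n-S_k\le(c+\eta)n$ ``outside an event of smaller-order probability'' cannot be extracted from $L^2$ control of the martingale $Z_m e^{-(S_m-S_k)}$. What is needed is $\P_{z_0}(Z_n\le z_0e^{S_n-n\epsilon})\le D\eta^n$ for \emph{arbitrarily small} $\eta$ and $z_0\ge N(\epsilon,\eta)$, because this exceptional event must be negligible against the target probability $e^{-n\chi(c)}$. Chebyshev in $L^2$ only yields $\P(W_n\le 1/2)=O(\mathrm{Var}(W_n))$, a constant rather than an exponentially small quantity; moreover $\sup_{\mathrm{env}}\mathrm{Var}(W_n\mid \mathrm{env})$ need not stay bounded in $n$ when $m(\p)$ is allowed to approach $1$, which it is in case (a). The paper's Lemma \ref{L:cost of dev from env} obtains the $D\eta^n$ bound by a quenched exponential Chebyshev inequality applied to $\sum_i(L_i-\epsilon-\log R_i)$, with a Taylor expansion of the quenched Laplace transform controlled by $(\H)$ and a large initial population; this is an argument of a different nature from martingale $L^2$ boundedness, and without it both your upper bound and your trajectorial step (the reduction of $\log Z$ to $S$ along the conditioned path before invoking a Mogulskii-type statement) remain open.
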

$\newline$

Let us note that if  $\mu(\p(1)>0)>0$, then $t_c$ -the take-off point of the trajectory- may either be zero, either be equal to $1-c/\bar{L}$, or belong to $(0,1-c/\bar{L})$
(see Section \ref{S:example} for examples).

Moreover, when $m:=m(\p)$ is deterministic, as in the case of a GW process,
\begin{itemize}
\item[-] If $\mu(\p(1)>0)>0$ (B\"{o}ttcher case), then  $t_c=1-c/\log(m)$ and $\chi(c)=t_c \log(\E(\p(1)))$.
\item[-] If $\mu(\p(1)>0)=0$ (Schrodder case), then  $\chi(c)=-\infty$.
\end{itemize}
$\newline$

Let us first give a heuristic interpretation of the above
theorem. Observe that $$\P(Z_k=1, k=1,\ldots , t n) = \E(\p
(1))^{t n} = \exp( \log (\E(\p (1))) t n  ) $$ and that
\begin{align*}
\lim_{n \to \infty } \frac1n \log \P( S_{(1-t)n}/n \le c  ) &=
(1-t) \psi(c/(1-t) )
\end{align*}
so that we have $$\P( Z_k=1, k=1,\ldots , t n\ ; S_n - S_{t n}
\le c n   ) \asymp \exp( n [ t \log (\E(\p (1))) + (1-t)
\psi(c/(1-t))] ) $$ and $\chi(c)$ is just the ``optimal'' cost of
such an event with respect to the choice of $t.$  It is not hard
to see that the event $\{ Z_k=1, k=1,\ldots , t n\ ; S_n - S_{t n}
\le c n   \}$ is asymptotically included in $\{Z_n \leq  cn \}$ and
hence $\chi(c)$ is an upper bound for the rate function for
$Z_n$. Adding that once  $Z_n>>1$ is large enough it has no
choice but to follow the random walk of the log-means of the
environment sequence,  $\chi$ is actually the  good candidate to
be the rate function.
\\

Thus, roughly speaking, to deviate below $c,$ the process
$(\log(Z_{[nt]})/n)_{t\in [0,1]}$ stays bounded until an optimal
time $t_c$ and then deviates in straight line to $c$ thanks to a
good sequence of environments.
The proof in Section 5 and 6 follows this heuristic. \\

Another heuristic comment concerns the behavior of the environment
sequence conditionally on the event $Z_n \le e^{cn}.$ Before time
$[n t_c]$ we see a sequence of iid environments which are picked
according to the original probability law $\mu$ biased by $\p(1)$
the probability to have one offspring (think of the case where $\mu$
charges only two environments). After time $[n t_c ]$ we know that
the distribution of the sequence $(L_i)_{i \ge [nt_c]}$ is the law
of a sequence of iid $L_i$ conditioned on $\sum_{i = [n t_c]}^n L_i
\le [n c].$ This implies that the law of the environments is that of
an exchangeable sequence with common distribution $\mu$ tilted by
the log-means.

To conclude this section, we comment on the hypothesis
$\P(\p(0)=0)=1.$ It is known (see \cite{AN}) that for a Galton
Watson process $Z_n$ with survival probability $p$ and generating
function $f,$ under the $N \log N$ condition, for all $j \in \N$
\begin{align} \tag{*} \gamma^{-n} \P(Z_n =j) \to \alpha_j
\end{align}
 where $\forall
j \in \N : \alpha_j \in (0,\infty)$ and  $\gamma =f'(p).$ In the
case where $\P(Z_1=0)=0$ (no death), $\gamma = f'(p) =
f'(0)=\P(Z_1=1)$ which tells us that the cost of staying bounded is
the cost of keeping the population size fixed at $1$, a fact that we
also use for our analysis of BPRE. This suggests that the analogue
of $\gamma$ for BPRE should also play a role in the lower deviations
events when $\P(\p(0)=0)<1$. However there is not yet an analogue of
$(*)$ for BPRE and the situation is probably more complex.

\subsection{Upper deviation in the strongly supercritical case}
Assume as above that
$$\P(\p(0)=0)=1,$$
and that for every $k\geq 1$,
$$\E(Z_1^k)<\infty,$$
we have the following large deviation result for upper values.

\begin{Thm} \label{upperth}
For every  $c> \bar L$,
$$- \frac1n \log (\P(Z_n \geq e^{ c n }))  \stackrel{n\rightarrow \infty}{\longrightarrow } \psi(c),$$  and furthermore for $c< \sup\{supp  \log(m(\p))\}$, conditionally on $Z_n\geq \exp(cn)$,
$$\sup_{t\in[0,1]} \{ \big\vert \log(Z_{[tn]})/n- ct \big\vert  \} \ \stackrel{n\rightarrow \infty}{\longrightarrow }0.$$
\end{Thm}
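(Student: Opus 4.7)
The plan is to complement the lower bound $\liminf_n \P(Z_n \geq e^{cn})^{1/n} \geq e^{-\psi(c)}$ given by Proposition \ref{P:minoration} with a matching upper bound, and to deduce the trajectory statement from the sharpness of the resulting rate. The crucial technical input is the uniform moment estimate $\sup_n \E(W_n^k) < \infty$ for every integer $k \geq 1$, where $W_n := Z_n/M_n$ is the natural martingale. I would prove it by induction on $k$: conditional on the environment $\mathcal{E}$, the recursion for iid sum moments expresses $\E(W_{n+1}^k \mid \mathcal{F}_n, \mathcal{E})$ as $W_n^k$ plus lower-order terms in $W_n$ whose coefficients mix the first $k$ moments of $\p_n$ (all finite by $\E(Z_1^k) < \infty$) with inverse powers of $M_n$. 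Since $\p(0) = 0$ forces $L \geq 0$, strict convexity of $\phi_L$ in the non-degenerate supercritical case $\bar L > 0$ yields $\phi_L(-j) < 0$ for every $j \geq 1$, so that $\E(1/M_n^j) = e^{n \phi_L(-j)}$ decays geometrically, the correction terms are summable, and the induction closes.

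With this moment bound in hand, for any $\epsilon > 0$ I would split
\begin{equation*}
\P(Z_n \geq e^{cn}) \leq \P(S_n \geq (c-\epsilon)n) + \P\bigl(Z_n \geq e^{cn},\, M_n \leq e^{(c-\epsilon)n}\bigr).
\end{equation*}
The first term is bounded by $e^{-n\psi(c-\epsilon)(1-o(1))}$ via (\ref{ldS2}); Markov applied to $Z_n^k = W_n^k M_n^k$ together with the moment estimate bounds the second by $C_k\, e^{-k\epsilon n}$. Choosing $k$ so large that $k\epsilon > \psi(c-\epsilon)$ and letting $\epsilon \to 0$, using continuity of $\psi$ on the interior of its effective domain, yields $\limsup_n -\tfrac{1}{n}\log \P(Z_n \geq e^{cn}) \geq \psi(c)$; the regime $c \geq \sup\mathrm{supp}(L)$, where $\psi(c) = \infty$, is handled by sending $k \to \infty$ in the pure Markov bound $\P(Z_n \geq e^{cn}) \leq C_k e^{-n(ck - \phi_L(k))}$.

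For the trajectory, set $A_n := \{Z_n \geq e^{cn}\}$. Convergence $\log Z_n / n \to c$ in $\P(\cdot \mid A_n)$-probability is immediate: the lower inequality is the conditioning itself, and the upper one follows from $\P(Z_n \geq e^{(c+\epsilon)n})/\P(A_n) \to 0$, using $c + \epsilon < \sup\mathrm{supp}(L)$ to make $\psi(c+\epsilon) > \psi(c)$ strictly. For an intermediate $t_0 \in (0,1)$, convergence $\log Z_{[nt_0]}/n \to c t_0$ rests on the convexity inequality
\begin{equation*}
t_0\, \psi(a) + (1-t_0)\, \psi\!\left(\frac{c - at_0}{1-t_0}\right) \geq \psi(c),
\end{equation*}
strict whenever $a \neq c$ (since $t_0 a + (1-t_0)(c-at_0)/(1-t_0) = c$ and $\psi$ is strictly convex). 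Conditioning on $\mathcal{F}_{[nt_0]}$ and applying the sharp LDP both to $Z_{[nt_0]}$ on $[0, nt_0]$ and to the continuation started from $Z_{[nt_0]}$ individuals on $[nt_0, n]$ bounds $\P(\log Z_{[nt_0]}/n \approx at_0,\, A_n)$ by $e^{-n[t_0\psi(a) + (1-t_0)\psi((c-at_0)/(1-t_0))](1+o(1))}$; strictness of the convex bound then forces the $\P(\cdot \mid A_n)$-probability of $|\log Z_{[nt_0]}/n - ct_0| > \epsilon$ to vanish. Uniform convergence in $t$ follows because $\p(0) = 0$ makes $t \mapsto Z_{[nt]}$ non-decreasing, so pointwise convergence at a sufficiently fine finite mesh upgrades to uniform via the continuity of $t \mapsto ct$.

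The main technical obstacle is executing the conditional LDP in the trajectory argument: establishing $\P(Z_n \geq e^{cn} \mid Z_{[nt_0]} = k) \leq e^{-(1-t_0)n\, \psi((c-at_0)/(1-t_0))(1+o(1))}$ uniformly for $k$ of order $e^{at_0 n}$. This requires re-running the split-and-Markov argument for a BPRE started from a large initial population: the branching property decomposes the continuation as a sum of $k$ iid copies sharing the post-$t_0$ environment, and a law-of-large-numbers reduction then recasts the event in terms of the random walk $S_n - S_{[nt_0]}$, with the $\sqrt{k}$-scale fluctuations controlled uniformly in $k$ by the moment bound for $W_n$.
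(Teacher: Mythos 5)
Your argument for the rate function is essentially the paper's: the same split of $\{Z_n\ge e^{cn}\}$ according to whether $S_n\ge (c-\eta)n$, the bound $\P(S_n\ge (c-\eta)n)\le e^{-n\psi(c-\eta)}$, and a Markov bound on $W_n=e^{-S_n}Z_n$ using $\sup_n\E(W_n^s)<\infty$; the paper imports this moment bound from Theorem 3 of Guivarc'h--Liu under the condition $\E(m(\p)^{1-s})<1$ (automatic for all $s$ when $\p(0)=0$ a.s.\ and $\bar L>0$), where you propose a self-contained induction --- a harmless substitution. The trajectory argument via strict convexity of $\psi$ is also the paper's (and the paper is equally brief about it), but one displayed claim of yours is not correct as stated: for $a<\bar L$ and $\mu(\p(1)>0)>0$, the cost of the initial segment $\{\log Z_{[nt_0]}\approx at_0n\}$ is $t_0\chi(a)$, not $t_0\psi(a)$ (this is exactly Theorem 2(a): the process can cheaply stay bounded), so the bound $\P(\log Z_{[nt_0]}/n\approx at_0,\,A_n)\le e^{-n[t_0\psi(a)+(1-t_0)\psi((c-at_0)/(1-t_0))](1+o(1))}$ overstates the decay and the convexity inequality does not by itself exclude low initial segments. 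To rule them out one needs $t_0\chi(a)+(1-t_0)\psi((c-at_0)/(1-t_0))>\psi(c)$, which does hold here: writing $\psi(c)=\lambda_c c-\phi_L(\lambda_c)$ with $\lambda_c\ge 0$ (since $c>\bar L$) and using $m(\p)\ge 1\ge \p(1)$ gives $-\log\E(\p(1))+\phi_L(\lambda_c)\ge 0$, so the ``stay small then shoot up'' strategies are strictly suboptimal for upper deviations. With that supplement your proof is complete and matches the paper's route.
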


To put it in words, this says that the cost of achieving a higher
than normal rate of growth is just the cost of seeing an atypical
sequence of environments in which this rate is expected.
Furthermore, conditionally  on $Z_n \ge e^{cn}$, the trajectory
$(\log(Z_{[nt]})/n)_{t\in [0,1]}$ is asymptotically a straight
line.

Kozlov \cite{kozld} gives the upper deviations of $Z_n$ in the case where
the generating functions $f$ are a.s. linear fractional and verify a.s. $f''(1)=2f'(1)^2$. In
the strongly supercritical case and under those hypothesis, he proves that for every $\theta>0$,  there exists $I(\theta)>0$ such that
$$\P(\log(Z_n)\geq  \theta n) \sim I(\theta) \P(S_n\geq  \theta n), \quad (n\rightarrow \infty).$$
Thus, Kozlov gets a finer result in the linear fractional case with  $f''(1)=2f'(1)^2$ a.s. by proving that the upper deviations of the BPRE $Z_n$  above $\bar L$ are exactly given by the large deviations of the random walk $S_n$.

Proposition \ref{P:minoration} shows that the rates of upper and lower deviations are at least those of the environments, but Theorem \ref{T:trajectoire gauche} and the remark below  show that the converse is not always true.

Theorem \ref{upperth} is the symmetric for upper deviations of case (b) of Theorem \ref{T:trajectoire gauche} for lower deviations. It is natural to ask if there is an analogue of case (a) as well. In this direction, we make the following two remarks.
\begin{itemize}
\item  If there exists $k>1$ such that
$$\E(Z_1^k)=\infty,$$
then the cost of reaching $c$ can be less that $\psi(c)$, since the BPRE might ``explode'' to a very large value in the first generation and then follow a geometric growth. This mirrors nicely what happens for lower deviations in the case (a). However we do not have an equivalent of Theorem \ref{T:trajectoire gauche} for upper deviations as such a result seems much harder to obtain for now.

\item In the case when
$$\P(m(\p)<1)>0,$$
then by Theorem 3 in \cite{Guiv},
$$s_{\max}:=\sup_{s\geq 1} \{\E(W^s)<\infty\}<\infty.$$
Thus, the BPRE $(Z_n)_{n\in\N}$ might deviate from the exponential of the random walk of
environments :
$$\lim_{n\rightarrow \infty} -\log(\P( \exp(-S_n) Z_n\geq \exp(n\epsilon))/n<\infty, \quad  (\epsilon>0),$$
which would yield a more complicated rate function for deviations.
\end{itemize}

\subsection{No large deviation without supercritical environment}

Finally, we consider the case when environments are a.s. subcritical or critical :
$$\P(m(\p)\leq 1)=1,$$
and we assume that for every $j\in \N$,
there exists $M_j>0$ such that
$$\sum_{k=0}^{\infty} k^j \p(k) \leq M_j \quad \t{a.s.} \qquad \ \ \ (\mathcal{M}).$$
Note that the condition $(\mathcal M)$ implies $(\H)$ simply by
considering $j=2.$ \\

In that case, even if $\P_1(Z_1\geq 2)>0$,  there is no large deviation, as in the case of a Galton Watson process.
\begin{Prop} \label{totalrecall} Suppose $(\mathcal{M})$ and that $\P(m(\p)\leq 1)=1$ , then for every $c>0$,
$$\lim_{n\rightarrow \infty} -\log(\P(Z_n\geq \exp(cn))/n=\infty.$$
\end{Prop}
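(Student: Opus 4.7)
The plan is to show that under $(\mathcal M)$ and $\P(m(\p)\le 1)=1$, every integer moment of $Z_n$ grows at most polynomially in $n$; Markov's inequality applied with moments of arbitrary order then yields super-exponential decay of $\P(Z_n\ge e^{cn})$.

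The main step is to prove by induction on $k\ge 1$ that there exist constants $D_k>0$ with $\E(Z_n^k)\le D_k(1+n)^k$ for every $n$. The case $k=1$ is immediate since $\E(Z_n)=\E(m(\p))^n\le 1$. For the inductive step, condition on $(Z_n,\p_n)$ and expand $Z_{n+1}^k=\bigl(\sum_{i=1}^{Z_n}X_i\bigr)^k$ with $X_i$ i.i.d.\ of law $\p_n$. Grouping the multi-index sum by set-partitions of $\{1,\dots,k\}$ gives
$$\E(Z_{n+1}^k\mid Z_n,\p_n)\;=\;\sum_{\pi}Z_n(Z_n{-}1)\cdots(Z_n{-}|\pi|{+}1)\prod_{B\in\pi}m_{|B|}(\p_n),$$
where $m_j(\p):=\sum_{r\ge 0}r^j\p(r)$. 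The partition into singletons contributes at most $Z_n^k\,m(\p_n)^k$, while each coarser partition ($|\pi|\le k-1$) contributes at most a constant, depending only on $k$ and on $M_1,\dots,M_k$ by $(\mathcal M)$, times $Z_n^{k-1}$. Since $\p_n$ is independent of $Z_n$ and $\E(m(\p)^k)\le 1$, taking expectations yields
$$\E(Z_{n+1}^k)\;\le\;\E(Z_n^k)+C_k\,\E(Z_n^{k-1}).$$
The inductive hypothesis gives $\E(Z_n^{k-1})\le D_{k-1}(1+n)^{k-1}$, and summing from $0$ to $n-1$ produces $\E(Z_n^k)\le D_k(1+n)^k$.

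With the moment bound in hand, Markov's inequality with the $k$-th moment delivers
$$\P(Z_n\ge e^{cn})\;\le\;e^{-ckn}\,\E(Z_n^k)\;\le\;D_k(1+n)^k\,e^{-ckn},$$
hence $\liminf_{n\to\infty}-\tfrac1n\log\P(Z_n\ge e^{cn})\ge ck$ for every integer $k\ge 1$; letting $k\to\infty$ proves Proposition \ref{totalrecall}. The only step requiring any care is the combinatorial moment expansion, and it is routine: the hypothesis $m(\p)\le 1$ a.s.\ is used exactly once, to keep the leading coefficient $\E(m(\p)^k)$ in the recursion at most one, and thereby prevent the moments from blowing up geometrically, precisely as in the classical subcritical Galton--Watson moment argument.
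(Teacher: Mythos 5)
Your proof is correct, and it follows the same overall strategy as the paper — show that every moment of $Z_n$ grows at most polynomially in $n$, then apply Markov's inequality at order $k$ and let $k\to\infty$ — but the way you obtain the moment bound is genuinely different. The paper works with factorial moments, writing $\E\bigl(Z_n(Z_n-1)\cdots(Z_n-k+1)\bigr)=\E(F_n^{(k)}(1))$ for the iterated generating function $F_n=f_0\circ\cdots\circ f_{n-1}$, and proves by an induction on the order of differentiation a deterministic bound $\sup_x G_1^{(k)}(x)\le C_k\,n^{k^k}$ for compositions of power series with $g_i(1)=1$ and $g_i'(1)\le 1$; the hypotheses $(\mathcal M)$ and $m(\p)\le 1$ enter as bounds on $f_i^{(j)}(1)$. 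You instead derive a one-step recursion $\E(Z_{n+1}^k)\le\E(Z_n^k)+C_k\,\E(Z_n^{k-1})$ for ordinary moments by conditioning on $(Z_n,\p_n)$ and expanding $\bigl(\sum_i X_i\bigr)^k$ over set partitions, using $\E(m(\p)^k)\le 1$ to tame the leading (all-singletons) term and $(\mathcal M)$ to bound the lower-order terms. Your route is arguably cleaner: it avoids the generating-function machinery, the induction is on $n$ rather than on the derivative order, and it yields the sharper bound $\E(Z_n^k)\le D_k(1+n)^k$ in place of $C_k n^{k^k}$ — though for the purpose of the proposition either polynomial bound suffices, since Markov at order $k$ gives rate at least $ck$ in both cases. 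All the steps you flag as routine (the set-partition expansion, the independence of $\p_n$ from $Z_n$, the base case $\E(Z_n)=\E(m(\p))^n\le 1$) do indeed go through.
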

We recall that by Proposition \ref{P:minoration}, this result does not hold if $\P(m(\p)> 1)>0$.
$\newline$

The next short section shows a concrete example where $t_c$ is non trivial. Section \ref{S:proof prop 1} is devoted
to the proof of Proposition \ref{P:minoration}. Section \ref{S:key lemmas}  is devoted to proving two key
lemmas which are then used repeatedly. The first gives the cost of keeping the population bounded for
a long time. The second tells us that once the population passes a threshold, it grows geometrically following the product of the means of environments. In Section \ref{S:proof}, we start by computing the rate function and then
 we describe the trajectory. Section \ref{S:proof upper}  is devoted to upper large deviation while
 Section \ref{S:subcritical} to case when environments are a.s. subcritical or critical.

\section{A motivating example : the case of two environments}
\label{S:example}

Suppose we have two environments $P_1$ and $P_2$ with $\mu(\p = P_1)= q.$ Call $L_1= \log m(P_1)$ and $L_2=\log m(P_2)$ their respective log mean and suppose $L_1 <L_2$. The random walk $S_n$ is thus the sum of iid variables $X : \P(X=L_1)=q, \P(X=L_2)=1-q.$

Recall that if $X$ is a Bernoulli variable with parameter $p$ the Fentchel Legendre transform of $\Lambda(\lambda) = \log (\E (e^{\lambda X}))$ is
$$
\Lambda^*(x) = x \log (x/p) + (1-x) \log ((1-x)/(1-p)).
$$
Hence the rate function for the large deviation principle associated to the random walk $S_n$ is
defined for  $L_1 \le x \le L_2$ by

$$
\psi(x)= z \log(z/p) +(1-z) \log ((1-z)/(1-p)) \text{ where } z = \frac{x-L_1}{L_2-L_1}.
$$

Recall that $\E( \p(1)) = qP_1(1) +(1-q)P_2(1)$ is the probability that an individual has exactly one descendent in the next generation. \\

The following figure 2 shows the function $t \mapsto -t \log (\E
(\p(1))) +(1-t)\psi(c/(1-t))$, so $\chi(c)$ is the minimum of this
function and $t_c$ is the $t$ where this minimum is reached. Figure
2 is drawn using the values $L_1=1, \ L_2=2, \ q=.5, \ \E (\p(1))
=.4$, $c=1.1$ and $1-c/\bar{L}\sim .27$. Thus, we ask $Z_n \leq
e^{1.1 n}$ whereas $Z_n$ behaves normally as $e^{1.5n}$ and this
example illustrate Theorem \ref{T:trajectoire gauche} a) with $t_c
\in (0,1-c/\bar{L}).$

\begin{figure}[h]\label{F:courbe}
 \begin{center}
\input{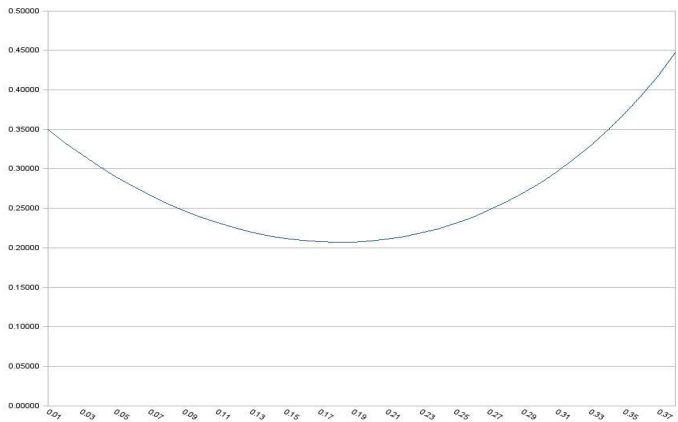}
\label{fig1}
\caption{In this example $t_c \sim 0.18$, the slope of the
function $f_c$ after $t_c$ is 1.34.  }
\end{center}
\end{figure}

$\newline$

As an illustration and a motivation we propose the following
model for parasites infection. In \cite{vb}, we consider a
branching model for parasite infection with cell division. In
every generation, the cells give birth to two daughter cells and
the cell population is the binary tree. We want to take into
account unequal sharing of parasites, following experiments made
in Tamara's Laboratory in Hopital Necker (Paris), and we
distinguish a first (resp. a second) daughter cell. Then we call
$Z^{(1)}$ (resp. $Z^{(2)}$) the number of descendants of a given
parasite of the mother cell in the first (resp. the second
daughter), where $(Z^{(1)},Z^{(2)})$ is any couple of random
variable (it may be non symmetric, dependent...). A key role for
limit theorems is played by the process $(Z_n)_{n\in\N}$ which
gives the number of parasites  in a random cell line (choosing
randomly one of the two daughter cells at each cell division and
counting the number of parasites inside). This process follows a
Branching process with two equiprobable environment with
respective reproduction law $Z^{(1)}$ and $Z^{(2)}$. Thus, here
$q=1/2$, $L_1=\log(\E(Z^{(1)}))$ and $L_2=\log(\E(Z^{(2)}))$.

We are interested in determining the number of cells with a large number of parasites and
we call $N_n^{\leq c}$ (resp $N_n^{\geq c}$) the number of cells in generation $n$ which
contain less (resp. more) than $\exp(cn)$ parasites, for $c>0$. An easy computation (which follows (17) in \cite{vb}) shows that
$$\E(N_n^{\leq c})=2^n\P(Z_n\leq \exp(cn)), \quad \E(N_n^{\geq c})=2^n\P(Z_n\geq \exp(cn)).$$
If $\P(Z^{(1)}=0)=\P(Z^{(2)}=0)=1$,  Section 2.1 ensures that for
every $c\geq \sqrt{\E(Z^{(1)})\E(Z^{(2)})}$,
$$\lim_{n\rightarrow \infty} \log(\E(N_n^{\leq c}))/n= \log(2)-\chi(c).$$
Moreover  Section 2.2 ensures that for every $c\geq \sqrt{\E(Z^{(1)})\E(Z^{(2)})}$,
$$\quad \lim_{n\rightarrow \infty} \log(\E(N_n^{\leq c}))/n= \log(2)-\psi(c).$$

\section{Proof of Proposition \ref{P:minoration} for general BPRE}
\label{S:proof prop 1}
Proposition \ref{P:minoration} comes from continuity of $\psi$ and
the following Lemma. \\
\begin{Lem}
For every $c>0$ and $z_0 \in \N$,
$$ \forall \epsilon>0, \quad \limsup_{n\rightarrow \infty}
-\frac1n\log(\P_{z_0}( c-\epsilon\leq \log(Z_n)/n \leq  c+\epsilon
)) \leq \psi(c).$$
\end{Lem}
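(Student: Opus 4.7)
The plan is to use a classical Cramér tilt of the environment sequence together with Guivarc'h's convergence theorem for the tilted BPRE. Heuristically, the cheapest way to force $\log Z_n/n \approx c$ is to pay the cost $e^{-n\psi(c)}$ of an atypical environment with $S_n/n \approx c$, and then let the BPRE grow at its typical (tilted-environment) rate $Z_n \asymp M_n$. Concretely, assume $c$ lies in the interior of the image of $\phi_L'$ (otherwise $\psi(c)=+\infty$ and the statement is vacuous). Pick $\lambda^*\in\R$ with $\phi_L'(\lambda^*)=c$, so $\psi(c)=\lambda^* c-\phi_L(\lambda^*)$, and introduce the tilted environment law $d\tilde\mu(p)/d\mu(p)=m(p)^{\lambda^*}e^{-\phi_L(\lambda^*)}$. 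Letting $\tilde{\P}_{z_0}$ denote the associated BPRE law (same branching mechanism given the environment, only the environment distribution changes), the Radon–Nikodym derivative depends only on the environment:
$$
\left.\frac{d\P_{z_0}}{d\tilde{\P}_{z_0}}\right|_{\sigma(\p_0,\ldots,\p_{n-1})} \;=\; e^{-\lambda^* S_n + n\,\phi_L(\lambda^*)},
$$
and under $\tilde{\P}_{z_0}$ the sequence $(L_i)$ is i.i.d.\ with mean $c$, so $S_n/n\to c$ a.s., and the tilted BPRE is supercritical since $c>0$.

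For $\delta\in(0,\epsilon)$ set $B_\delta = \{|\log Z_n/n-c|<\delta\}\cap\{|S_n/n-c|<\delta\}$. On $B_\delta$ one has $e^{-\lambda^* S_n + n\phi_L(\lambda^*)}\ge e^{-n(\psi(c)+|\lambda^*|\delta)}$, hence
$$
\P_{z_0}(|\log Z_n/n-c|<\epsilon) \;\ge\; \P_{z_0}(B_\delta) \;\ge\; e^{-n(\psi(c)+|\lambda^*|\delta)}\,\tilde{\P}_{z_0}(B_\delta).
$$
Guivarc'h's theorem applied to the tilted supercritical BPRE yields $Z_n/M_n\to\tilde W$ a.s.\ with $\tilde{\P}_{z_0}(\tilde W>0)=\tilde{\P}_{z_0}(\forall n,\ Z_n>0)>0$; on $\{\tilde W>0\}$, $\log Z_n/n = S_n/n + n^{-1}\log(Z_n/M_n) \to c$, while the LLN gives $S_n/n\to c$ a.s., so $\liminf_n \tilde{\P}_{z_0}(B_\delta)\ge \tilde{\P}_{z_0}(\tilde W>0)>0$. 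Taking $-\tfrac1n\log$ of the displayed bound and then letting $n\to\infty$ followed by $\delta\to 0$ yields $\limsup_n -\tfrac1n\log\P_{z_0}(|\log Z_n/n - c|<\epsilon)\le \psi(c)$, as wanted.

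The main obstacle is checking that Guivarc'h's moment hypothesis survives the tilt, since $\tilde\E[\sum_k k^s\p(k)/m(\p)]=\E[m(\p)^{\lambda^*-1}\sum_k k^s\p(k)]/\E[m(\p)^{\lambda^*}]$ is not automatic from the hypothesis of Proposition~\ref{P:minoration}; one may need Hölder-type bounds (whose usefulness depends on the sign of $\lambda^*-1$) or a mild strengthening of the moment condition. A fully quenched alternative bypasses this point: the assumed $s$-th moment condition produces uniform-in-$n$ bounds on $\E_\omega[W_n^s]$ for $\mu$-typical $\omega$, and a Paley–Zygmund argument then delivers $\P_\omega(W_n\in(\eta,\eta^{-1}))\ge c_0>0$ uniformly on a set of environments of positive $\mu$-measure containing a substantial portion of $\{|S_n/n-c|<\delta\}$, which is exactly what the inequality above requires.
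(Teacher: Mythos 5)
Your proof is correct and follows essentially the same route as the paper: the same exponential tilt $d\tilde\mu/d\mu \propto m(\p)^{\lambda^*}$, supercriticality of the tilted BPRE, positivity of the target event's probability under the tilted law via the Guivarc'h/Kesten--Stigum convergence, and a change of measure back (the paper executes the last step via Fatou's lemma with an auxiliary drift $\eta$ rather than your pointwise Radon--Nikodym bound on $B_\delta$, but this is a cosmetic difference). Your remark that the moment hypothesis must be rechecked under the tilted environment law is well taken --- the paper itself passes over this point silently --- and your quenched Paley--Zygmund fallback is a reasonable way to close it.
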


\begin{proof}
Let $c>0$. Recall that $\phi_L(\lambda)=\E(\exp(\lambda L))$,
$$\psi(c)=\sup_{\lambda \in \R}\{\lambda c- \phi_L(\lambda)\},$$
and this supremum is reached in   $\lambda=\lambda_c$ such that
$$c=\phi'_L(\lambda_c)= \frac{\E(Le^{\lambda_c L})}{\E(e^{\lambda_c L})}=\frac{\E(m(\p)^{\lambda_c}\log(m(\p)))}{\E(m(\p)^{\lambda_c})}.$$

Then introduce the probability $\w{\P}$ on $\P$ defined by
$$\w{\P} ( \p  \in  \d p) =  \frac{ m(p)^{\lambda_c}}{\E( m(\p)^{\lambda_c})} \P(\p \in \d p).$$
Under this new probability
$$ \w{\E} (\log m(\p))=c>0,$$
so under $\w{\P}, \ S_n= \sum_{i=1}^n \log m(\p_i)$ is a random walk with drift $c$ and $Z_n$ is a supercritical BPRE under  with survival probability $\w{p}>0$. Then, for every
$0<\epsilon<c$, \be \label{nv} \lim_{n\rightarrow \infty}
 \w{\P}_{z_0}\big( c-\epsilon\leq \log(Z_n)/n \leq c+\epsilon \big)=\w{p}>0.
\ee
Moreover, for every bounded measurable function $f$,
$$\E_{z_0}(f(Z_n))=[\E(m(\p)^{\lambda_c})]^n\w{\E}_{z_0}(\exp(- \lambda_c S_n) f (Z_n) ).$$
We will use the above with $f(z)=
\ind_{[c-\epsilon,c+\epsilon]}(\log(z)/n$ to obtain that, for every
$\eta >0$, \Bea
&&\P_{z_0}\big( c-\epsilon\leq \log(Z_n)/n \leq  c+\epsilon\big) \\
&=&  [\E ( m(\p)^{\lambda_c})]^n\exp \big( -n(\lambda_c c +\eta)
\big) \w{\E}_{z_0}\big(\exp(-\lambda_c S_n+ n(\lambda_c c+\eta))
f(Z_n)\big). \Eea
$\newline$
Now, under $\w{\P}$, $(-\lambda_c S_n+ n(\lambda_c
c+\eta))_{n\in\N}$ is a random walk with positive drift $\eta>0$
which tends to infinity as $n$ tends to infinity. By using
(\ref{nv}) we see that under $\w{\P}$ $f(Z_n) \to 1$ almost surely so that
$$\w{\P}_{z_0}\big(\liminf_{n\rightarrow \infty} \exp(-\lambda_c S_n+ n(\lambda_c c+\eta)) f(Z_n)
=  \infty \big) = \w{p}. $$ This ensures, by Fatou's lemma,
$$\liminf_{n\rightarrow  \infty}
\w{\E}_{z_0}\big(\exp(-\lambda_c S_n+ n(\lambda_c c+\eta))
f(Z_n) \big)=\infty.$$
And since $\E_{z_0} (f(Z_n)) = \E(m(\p)^{\lambda_c})^n \w{\E}_{z_0} (exp(-\lambda_c S_n) f(Z_n))$ we get
\begin{align*}
\liminf_{n\rightarrow \infty}  \frac1n \log(\P_{z_0}\big(c-\epsilon\leq
\log(Z_n) \leq  c+\epsilon \big) \big)/n &\geq \log [\E
(m(\p)^{\lambda_c})] - \lambda_c c -\eta \\ &= -\psi(c)-\eta.
\end{align*}

Letting $\eta\rightarrow 0$  gives
$$\liminf_{n\rightarrow \infty} \frac1n\log\big(\P_{z_0}\big(c-\epsilon\leq \log(Z_n) /n  \leq  c+\epsilon  \big)  \big) \geq
-\psi(c),$$
which completes the proof.
\end{proof}

\section{Key lemmas for lower deviation}
\label{S:key lemmas}

\subsection{The function $\chi$}
Observe that we have the following non-asymptotic bound \cite{dembo zeitouni}:
If $c\le \bar L$,
\begin{align}\label{E:simple large dev bound lower}
 \forall n \in \N: \P(S_n \le n c) \le \exp(-n \psi(c))
\end{align}
and if $c \ge \bar L$,
\begin{align}\label{E:simple large dev bound upper}
 \forall n \in \N: \P(S_n \ge n c) \le \exp(-n \psi(c)).
\end{align}
$\newline$

We recall that
\Bea
\chi  (c) &:=& \inf_{  t \in [0,1]} \{ - t \log (\E(\p (1))) + (1-t) \psi(c/(1-t))\}.
\Eea
\begin{Lem}\label{L:def de chi}
There exists a unique $t_c \in  [0,1]$ such that
$$\chi(c) = -t_c \log (\E(\p(1)))+ (1-t_c) \psi(c/(1-t_c)),$$
and $t_c\in [0,1-c/\bar{L}]$.
\end{Lem}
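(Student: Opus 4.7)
The plan is to analyze the function $g(t) := -t \log (\E(\p(1))) + (1-t)\psi(c/(1-t))$ on $[0,1)$ and show it is strictly convex and attains its minimum uniquely on $[0, 1-c/\bar L]$. I will not need anything beyond the properties of $\psi$ recalled in the paper, namely strict convexity and smoothness on the interior of its effective domain, $\psi(\bar L)=0$, and strict monotonicity on either side of $\bar L$.

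First I would establish strict convexity. The linear term is harmless, so the issue is the second term. Writing $h(s):=s\,\psi(c/s)$, this is the perspective of $\psi$ evaluated with fixed "numerator" $c$; a direct computation gives $h''(s)=(c^2/s^3)\psi''(c/s)$, which is strictly positive wherever $c/s$ lies in the interior of the effective domain of $\psi$. Hence $t \mapsto h(1-t)$ is strictly convex on the corresponding range, so $g$ is strictly convex. Combined with lower semicontinuity of $\psi$, this yields uniqueness of any minimizer.

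Next I would confine the minimizer to $[0, 1-c/\bar L]$. Set $t^*:=1-c/\bar L$ (which lies in $[0,1]$ because $0< c \le \bar L$). For $t\in (t^*,1)$, $y:=c/(1-t)>\bar L$, so $\psi(y)>0$ and $\psi'(y)>0$. Differentiating:
\[
g'(t)= -\log(\E(\p(1))) + \bigl[y\,\psi'(y)-\psi(y)\bigr].
\]
Since $\p(0)=0$ a.s.\ (in the strongly supercritical setup of Section \ref{lowerLD}) we have $\E(\p(1))\leq 1$, so $-\log(\E(\p(1)))\geq 0$. Moreover, convexity of $\psi$ together with $\psi(\bar L)=0$ gives
\[
0=\psi(\bar L)\geq \psi(y)+\psi'(y)(\bar L-y),
\]
i.e.\ $y\,\psi'(y)-\psi(y)\geq \bar L\,\psi'(y)\geq 0$ for $y\geq \bar L$. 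Thus $g'(t)\geq 0$ on $(t^*,1)$, so $g$ is non-decreasing there and the infimum is attained on $[0,t^*]$. Existence on this compact interval follows from lower semicontinuity of $g$ (and finiteness at $t^*$, where $g(t^*)=-t^*\log(\E(\p(1)))<\infty$, so the infimum is finite). Strict convexity then gives a unique minimizer $t_c\in[0,t^*]$, as claimed.

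The only real subtlety is taking the derivative cleanly when $c$ may sit near the boundary of the effective domain of $\psi$: one can avoid differentiating at bad points by noting that the argument of $\psi$ in $g(t)$ for $t\in[t^*,1)$ ranges over $[\bar L,\infty)$, which lies in the region where $\psi$ is smooth and convex, so the computation above is valid throughout $(t^*,1)$. Everything else is a routine consequence of convexity and the minimum-at-$\bar L$ property of $\psi$.
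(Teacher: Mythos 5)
Your argument is correct and follows essentially the same route as the paper: both establish strict convexity of $t \mapsto -t\log(\E(\p(1)))+(1-t)\psi(c/(1-t))$ through the positivity of the second derivative of the perspective term (the paper phrases this as a Rolle-type contradiction, you compute $h''$ directly), and both confine the minimizer to $[0,1-c/\bar L]$ via the sign of the first derivative there. Your tangent-line inequality on all of $(1-c/\bar L,1)$ is a slightly more robust substitute for the paper's single evaluation $v'(1-c/\bar L)=\rho>0$ (which uses $\psi(\bar L)=\psi'(\bar L)=0$), but the content is the same.
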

\begin{proof}

Put $\rho := -\log (\E(\p (1)))$ and $v(t) :=   \rho t +(1-t) \psi(c/(1-t)).$
Then we have $v'(t)=\rho - \psi (c/(1-t)) + \frac{c}{1-t} \psi'(\frac{c}{1-t})$ and if we let $y =\frac{c}{1-t}$ we thus want to solve the equation
\begin{align*}
 0 &= v'(t)= \rho - \psi (y) +y \psi'(y)
\end{align*}

Assume that $v'(t)=0$ has two solutions $t_1 < t_2$ both in $[0,1],$ then there exists $t_3 \in (t_1, t_2)$ such that $v''(t_3) = 0$, i.e.
$$
0=-\psi'(y_3) + \psi'(y_3) +y_3 \psi''(y_3), \quad \t{where} \ y_3=\frac{c}{1-t_3}.
$$
That is impossible since $\psi'' >0.$ Adding that $v'(1-c/\bar{L})=\rho>0$ completes the proof.
\end{proof}
$\newline$

\subsection{The cost of staying bounded}

We start with the following elementary result, which says that staying bounded has the same exponential cost than staying fixed at 1.

\begin{Lem}\label{L:staying bounded}
For every $N\geq 1$,
$$\lim_{n\rightarrow \infty}\log(\P(Z_n\leq N))/n=\log(\E(\p(1))).$$
Moreover, if $\E(\p(1))>0$, then for every fixed $N$ there is a constant $C$ such that for every $n \in \N$,
$$\P(Z_n\leq N) \le C n^{N}\E(\p(1))^{n+1}.$$
\end{Lem}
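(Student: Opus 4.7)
My plan is to sandwich $\P(Z_n\le N)$ between explicit bounds involving $\E(\p(1))^n$. The lower bound is almost free because the hypothesis $\P(\p(0)=0)=1$ guarantees $Z_n\ge 1$ always, so $\{Z_n=1\}\subset\{Z_n\le N\}$. Conditioning on the environment $(\p_1,\ldots,\p_n)$, the event $\{Z_n=1\}$ is equivalent to the unique individual in each generation having exactly one offspring, so $\P(Z_n=1)=\E\big[\prod_{i=1}^n \p_i(1)\big]=\E(\p(1))^n$ by independence of the environments. This gives $\liminf_n \tfrac{1}{n}\log\P(Z_n\le N)\ge \log\E(\p(1))$ immediately.

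For the upper bound I would exploit that $(Z_n)$ is nondecreasing with integer-valued jumps under $\P(\p(0)=0)=1$, so on $\{Z_n\le N\}$ the number of indices $k\in\{1,\ldots,n\}$ satisfying $Z_k>Z_{k-1}$ is at most $N-1$. I would then union-bound over the set $S\subset\{1,\ldots,n\}$ of jump times (with $|S|=m\le N-1$) and control, for each fixed $S$, the probability $p_S$ that no jump occurs at any $k\notin S$. The natural first thought is to factorize, but this fails because the events $\{Z_k=Z_{k-1}\}$ are not conditionally independent even given the environment. The fix, which I see as the main technical step, is to condition on the environment and peel off indices one at a time using the tower property: the one-step conditional probability of no jump at time $k$ given $\mathcal F_{k-1}$ and the environment equals $\p_k(1)^{Z_{k-1}}\le \p_k(1)$, the inequality coming from $Z_{k-1}\ge 1$. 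A clean backward recursion then yields $p_S\le \E\!\left[\prod_{k\notin S}\p_k(1)\right]=\E(\p(1))^{n-m}$ by independence of the $\p_k$.

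Summing over the $\binom{n}{m}\le n^{N-1}$ choices of $S$ for each of the $N$ possible values of $m\in\{0,\ldots,N-1\}$ then gives
\[
\P(Z_n\le N)\ \le\ \sum_{m=0}^{N-1}\binom{n}{m}\E(\p(1))^{n-m}\ \le\ C\,n^{N-1}\E(\p(1))^{n-N+1},
\]
and absorbing $N$ powers of $\E(\p(1))^{-1}$ into the constant produces the stated bound $C\,n^N\E(\p(1))^{n+1}$ (valid when $\E(\p(1))>0$), while also yielding $\limsup_n \tfrac{1}{n}\log\P(Z_n\le N)\le \log\E(\p(1))$, matching the lower bound.

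The degenerate case $\E(\p(1))=0$ has to be treated separately but is trivial: then $\p(1)=0$ almost surely, so combined with $\p(0)=0$ a.s.\ every individual has at least two offspring, hence $Z_n\ge 2^n$ and $\P(Z_n\le N)=0$ for all $n>\log_2 N$, consistent with the limit $\log\E(\p(1))=-\infty$. No other obstacle arises.
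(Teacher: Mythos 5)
Your proof is correct, and it takes a genuinely different route from the paper's. The paper introduces an auxiliary random lineage $(N_i)_{i\geq 0}$ — pick one offspring uniformly at each step — and observes that the $N_i$ are iid with $\P(N_i=k)=\E(\p(k))$; since each generation where the lineage individual has more than one child forces $Z$ to increase by at least one (thanks to $\P(\p(0)=0)=1$), the event $\{Z_n\le N\}$ is contained in the binomial tail event $\{\#\{i: N_i>1\}<N\}$, and the bound drops out of a direct binomial computation with no conditioning gymnastics. You instead work directly with the nondecreasing process $Z_n$ itself and union-bound over the possible jump sets $S$, correctly noting that the events $\{Z_k=Z_{k-1}\}$ are not conditionally independent and must be decoupled by the tower property via $\P(Z_k=Z_{k-1}\mid \mathcal F_{k-1},\p_{k-1})=\p_{k-1}(1)^{Z_{k-1}}\le \p_{k-1}(1)$. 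The paper's spinal decomposition buys a cleaner iid structure and is a standard BPRE tool reused elsewhere; your argument is more elementary and avoids the auxiliary construction, at the cost of an explicit peeling step. Both yield the same polynomial-times-$\E(\p(1))^n$ envelope, and your treatment of the degenerate case $\E(\p(1))=0$ is also correct.
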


\begin{proof}
We call $(N_i)_{ i\geq 1}$ the number of offspring of a random lineage. More explicitly, we call $N_0$ the size  of the offspring of the ancestor in generation $0$ and pick uniformly one individual among this offspring. We call $N_1$ the size of the offspring of this individual and so on...

Note that $(N_i)_{i\geq 1}$ are iid with common ditribution $\P(N=k)=\E(\p(k))$.
Hence, for every $n\geq N$, recalling that $\P(p(0)=0)=1$,
\begin{align*}
 \P(Z_n \leq N) &\le \P( \text{less than $N$ of the $(N_i)_{0\leq i\leq n-1}$ are} >1  ) \\
&\le   \sum_{k=0}^{N} \left(\begin{array}{c} n \\ k \end{array} \right) (1-\E(\p(1)))^k \E(\p (1))^{n-k} \\
& \le (N+1) n^{N} \E(\p (1))^{n-N}.
\end{align*}
Adding that
$$\P(Z_n\leq N)\geq \P(Z_n=1)=\E(\p(1))^n,$$
allows us to conclude.
\end{proof}

Our proof actually shows the stronger $$\lim_{n\rightarrow \infty}\log(\P(Z_n\leq n^a))/n=\log(\E(\p(1))),$$ for $a \in (0,1).$
$\newline$

\subsection{The cost of deviating from the environments}

The aim of this section is to show that  once the process ``takes off'' (i.e. once the population passes a certain threshold), it has to follow the products of the means of the environments sequence.

\begin{Lem}
\label{L:cost of dev from env} Assuming $\H$, for all  $\epsilon>0$
and $\eta>0$, there exist $N,D \in \N$ such that for all $z_0\geq
N$ and  $n\in\N$,
$$
\P_{z_0}
(Z_n\leq z_0 \exp(S_n-n\epsilon) \ \vert \ (\p_i)_{i=0}^{n-1})\leq
D\eta^n \quad \t{a.s.}
$$
so that
$$
\P_{z_0} (Z_n\leq \exp(S_n-n\epsilon))\leq  D\eta^n.
$$
\end{Lem}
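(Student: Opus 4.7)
The plan is to exploit the conditional iid structure: given $(\p_i)_{i<n}$, $Z_n = \sum_{j=1}^{z_0} Z_n^{(j)}$ where the $Z_n^{(j)}$ are iid copies of a BPRE started from one ancestor driven by the same environments. Writing $W_n^{(j)} := Z_n^{(j)}/e^{S_n}$ (conditional mean $1$), an exponential Markov inequality on the sum gives, for every $\lambda>0$,
\begin{equation*}
\P_{z_0}\bigl(Z_n\le z_0\,e^{S_n-n\epsilon}\bigm\vert(\p_i)\bigr)\;\le\; \bigl(e^{\lambda e^{-n\epsilon}}\,\varphi_n(\lambda)\bigr)^{z_0},\qquad \varphi_n(\lambda):=\E\bigl(e^{-\lambda W_n^{(1)}}\bigm\vert(\p_i)\bigr).
\end{equation*}
The $z_0$-th power is the mechanism that will convert a modest exponential-in-$n$ decay on the bracket into the claimed $\eta^n$.

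The heart of the proof will be a uniform-in-$n$ \emph{quenched} bound of the form $\varphi_n(\lambda)\le C\lambda^{-\kappa}$ for $\lambda\in[1,e^{S_n}]$, where $\kappa,C>0$ depend only on the constants $(A,B)$ in $\H$ and on $\mu$. Equivalently, a quenched power-law lower-tail estimate $\P(W_n^{(1)}\le t\mid(\p_i))\le Ct^{\kappa}$, of Schr\"oder--Rouault--Hambly type. I would prove this by iterating the composition $f_0\circ\cdots\circ f_{n-1}$ of the one-step probability generating functions near $s=1$: the mean bound $m(\p)\le A$ controls the linear contraction $1-f_i(s)=m(\p_i)(1-s)+O\bigl((1-s)^2\bigr)$, the second-moment bound $\sum_k k^2\p(k)\le B$ provides a uniform control on the quadratic remainder, and strong supercriticality $\p(0)=0$ (so $f_i(0)=0$) pins the iterates at $0$ and forces a genuine power-law rather than exponential behaviour of $1-f_0\circ\cdots\circ f_{n-1}(s)$ as $n$ grows.

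Given such a bound, the conclusion is routine. Take $\lambda_n:=e^{n\epsilon/2}$. Outside the event $\{S_n\ge n\epsilon\}$ the target event is empty since $Z_n\ge z_0$ always; on it, $\lambda_n\in[1,e^{S_n}]$ and $\lambda_n e^{-n\epsilon}\to 0$, so the core bound yields
\begin{equation*}
e^{\lambda_n e^{-n\epsilon}}\,\varphi_n(\lambda_n)\;\le\; 2C\,e^{-\kappa n\epsilon/2}\;=:\;q^n
\end{equation*}
for some $q<1$. Raising to the $z_0\ge N$-th power gives $\le q^{Nn}$, and choosing $N$ so that $q^N\le\eta$ produces the claimed $\eta^n$; the finitely many small-$n$ values are absorbed into the constant $D$. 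The main obstacle is the middle step: the rest is standard Chernoff bookkeeping, but the uniform quenched power-law Laplace/tail estimate requires the quantitative iterative analysis of the branching generating-function composition that $\H$ is precisely tailored to make possible.
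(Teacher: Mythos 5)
Your reduction to $z_0$ iid single-ancestor copies and the Chernoff bookkeeping at the end are fine, but the entire difficulty has been pushed into the ``uniform quenched power-law Laplace estimate'' $\varphi_n(\lambda)\le C\lambda^{-\kappa}$, which is only sketched and which, as stated, is false in general. Taking $\lambda=e^{S_n}$ and using $e^{-\lambda W_n^{(1)}}\ge e^{-1}\ind_{\{Z_n^{(1)}=1\}}$ shows that your bound forces $\prod_{i<n}\p_i(1)\le eC\,\prod_{i<n}m(\p_i)^{-\kappa}$ for a.e.\ environment sequence and every $n$; a deterministic $\kappa>0$ must therefore satisfy $\kappa\le -\log p(1)/\log m(p)$ for every environment $p$ charged by $\mu$. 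But $\H$ permits $\mu$ to charge environments $p(1)=1-\delta$, $p(K)=\delta$ with $K$ arbitrarily large and $K^2\delta\le B$, for which this ratio is $\sim 1/(K-1)\to 0$, so no such $\kappa$ exists. More fundamentally, even the weaker version you actually invoke (at $\lambda_n=e^{n\epsilon/2}$ on $\{S_n\ge n\epsilon\}$) is a quenched Schr\"odder-type lower-tail estimate for a \emph{single-ancestor} BPRE, i.e.\ essentially the lemma itself for $z_0=1$ --- a case the statement deliberately excludes by requiring $z_0\ge N$, and which is genuinely harder: the linearization $1-f_i(s)=m(\p_i)(1-s)+O((1-s)^2)$ controls one step of the composition up to multiplicative constants, but extracting an exponentially small factor $e^{-\kappa n\epsilon/2}$ from the $n$-fold composition is the Dubuc--Hambly small-value problem, not a routine iteration.

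The paper avoids this entirely by decomposing multiplicatively in time rather than over ancestors: writing $Z_n=z_0\prod_{i<n}R_i$ with $R_i=Z_{i+1}/Z_i$, it uses that $Z_i\ge z_0\ge N$ at \emph{every} step (since $\p(0)=0$ a.s.), so each $R_i$ is an empirical mean of at least $N$ iid offspring numbers and concentrates around $m(\p_i)$. A Taylor expansion of $\lambda\mapsto\E\big(e^{\lambda(L_i-\epsilon-\log R_i)}\,\big\vert\,\p_i,Z_i=q\big)$, with mean and second derivative controlled uniformly via $\H$, yields a per-step factor $1-\alpha<1$ for a small fixed $\lambda_0$, and hence $(1-\alpha)^n$. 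The arbitrary rate $\eta$ is then obtained exactly by your boosting idea, but applied to $k$ independent subpopulations of size $[z_0/k]\ge N$ rather than to singletons. To salvage your plan you would need to keep every subpopulation above the threshold $N$ where this law-of-large-numbers effect is available; splitting down to single ancestors reintroduces the Schr\"odder phenomenon that this lemma is designed to bypass.
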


 Define for every $0\leq i\leq n-1$,
$$R_i:=Z_{i+1}/Z_i,$$
so that
$$ Z_n= Z_0 \Pi_{i=0}^{n-1} R_i.$$

For all $\lambda\geq 0$, $q\in\N$ and $0\leq i\leq n-1$ define the
function
$$\Lambda_{q}(\lambda,p) := \E\big(\exp(\lambda[L_i -\epsilon -\log(R_i)]) \ \vert \
\p_{i}=p, \ Z_{i}=q \big),$$ (this quantity does not depend on $i$ by Markov property) and

$$M_{N} (\lambda, p):=\sup_{q\geq N}  \Lambda_{q}(\lambda,p). $$

The proof will use the following Lemma, the proof of which is given at the end of this section.
\begin{Lem}\label{L:majoration de M}
Fix $\epsilon >0,$ there exist $\alpha \in (0,1),\lambda_0 \in(0,1)$
and $N\in \N$ such that
$$M_{N } (\lambda_0, \p) \le 1-\alpha \qquad \text{a.s.}$$ where
$\p$ is a random probability with law $\mu.$
\end{Lem}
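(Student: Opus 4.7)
The plan is to expand $\Lambda_q(\lambda,p)$ explicitly and show that, uniformly in $p$ in the support of $\mu$, it is bounded by $e^{-\lambda\epsilon}$ up to a correction of order $1/q$. Conditionally on $\p_i=p$ and $Z_i=q$, the next generation size is $Z_{i+1}=\sum_{j=1}^{q} X_j$ with $X_j$ i.i.d.\ of law $p$, so that $R_i=Z_{i+1}/q$ is an empirical mean with expectation $m(p)$. A direct computation then gives
\[
\Lambda_q(\lambda,p) \;=\; e^{-\lambda\epsilon}\, m(p)^{\lambda}\, \E\!\left[R_i^{-\lambda}\right] \;=\; e^{-\lambda\epsilon}\, \E\!\left[(1+U)^{-\lambda}\right],
\]
where $U:=R_i/m(p)-1$ satisfies $\E(U)=0$ and, using the second moment bound in $\H$ together with $m(p)\ge 1$ (which follows from $\p(0)=0$), $\E(U^2)\leq B/(q\,m(p)^2)\leq B/q$.

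The key step is a second-order Taylor estimate for $h(u)=(1+u)^{-\lambda}$. Since $\p(0)=0$ forces $R_i\geq 1$ and the upper bound $m(p)\leq A$ from $\H$ gives $m(p)\in[1,A]$, we have $U\geq -1+1/A$ almost surely. On the interval $[-1+1/A,\infty)$, the second derivative $h''(u)=\lambda(\lambda+1)(1+u)^{-\lambda-2}$ is bounded by $K(\lambda):=\lambda(\lambda+1)A^{\lambda+2}$, so Taylor's theorem with Lagrange remainder yields the pointwise inequality
\[
(1+U)^{-\lambda} \;\leq\; 1 - \lambda U + \tfrac{1}{2}\,K(\lambda)\, U^2 \quad \text{a.s.}
\]
Taking expectations kills the linear term and gives the uniform estimate
\[
\Lambda_q(\lambda,p) \;\leq\; e^{-\lambda\epsilon}\left(1 + \frac{K(\lambda)\,B}{2q}\right),
\]
valid simultaneously for every $p$ in the support of $\mu$ and every $q\ge 1$.

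To conclude, I would fix any $\lambda_0\in(0,1)$ and choose $N$ depending only on $\lambda_0,\epsilon,A,B$ so large that $1 + K(\lambda_0)B/(2q)\leq e^{\lambda_0\epsilon/2}$ for every $q\ge N$. This yields $\Lambda_q(\lambda_0,p)\leq e^{-\lambda_0\epsilon/2}$ uniformly in $q\ge N$ and in $p$ in the support of $\mu$, whence $M_N(\lambda_0,\p)\leq 1-\alpha$ almost surely with $\alpha = 1-e^{-\lambda_0\epsilon/2}>0$. The main obstacle is precisely this uniformity in $p$: both the upper bound $m(p)\leq A$ and the uniform second moment bound from $\H$ are needed to control $K(\lambda_0)$ and $\E(U^2)$, and the strong supercriticality $\p(0)=0$ is needed to keep $1+U$ bounded away from $0$; without any one of these ingredients the remainder in Taylor's formula could blow up, or small-probability events on which $R_i$ is close to $0$ would make $R_i^{-\lambda}$ uncontrollably large and spoil the argument.
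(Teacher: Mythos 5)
Your proof is correct, and it takes a recognizably different route from the paper's. The paper Taylor-expands the Laplace transform in the variable $\lambda$ around $0$, writing $\Lambda_q(\lambda)=1+\lambda\,\E(L_0-\epsilon-\log R_0\mid \p_0,Z_0=q)+\lambda^2\Lambda_q''(c_\lambda)$; it then shows the drift term is $\le -\epsilon/2$ for $q\ge N$ (via the comparison $|\log R_0-L_0|\le|R_0-m|$ and Cauchy--Schwarz, using $R_0,m\ge 1$), bounds $\Lambda_q''$ by a uniform constant $M$ using $\H$, and finally must take $\lambda_0$ \emph{small} so that $\lambda_0\epsilon/2-\lambda_0^2M>0$. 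You instead factor out the deterministic part exactly, $\Lambda_q(\lambda,p)=e^{-\lambda\epsilon}\,\E[(1+U)^{-\lambda}]$ with $U=R_i/m(p)-1$ centred, and Taylor-expand in $U$; the linear term vanishes exactly because $\E(U)=0$, and the quadratic remainder is $O(1/q)$ uniformly over the support of $\mu$ thanks to $\H$ and $1+U\ge 1/A$ (which uses $\p(0)=0$ and $m(p)\le A$, exactly as you say). This buys you two things: any fixed $\lambda_0\in(0,1)$ works (only $N$ needs tuning, not $\lambda_0$), and you bypass the slightly delicate step of estimating $\E(\log R_0)-L_0$, replacing it by the exact identity $\E(U)=0$. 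The ingredients used --- the uniform bounds $A$ and $B$ from $\H$ and strong supercriticality to keep $R_i$ bounded below --- are the same in both arguments, and your closing remark correctly identifies why each is indispensable.
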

$\newline$

We proceed with the proof of Lemma \ref{L:cost of dev from env} assuming that the above result holds.
\begin{proof}

Let us fix $\epsilon >0$ and $k \in \N$ and let us show that $\exists \alpha \in (0,1), N\in\N, C>0$ such that
$\forall n \in \N, z_0 \geq N$
\begin{equation} \label{E:preuve de cost of dev 1}
 \P_{z_0} (Z_n \le k z_0 \exp (S_n - n \epsilon) |  \ (\p_i)_{i=0}^{n-1}) \leq  C (1-\alpha)^n.
\end{equation}

For every  $\lambda >0$,
\begin{align}
\label{decsim}
& \nonumber \P_{z_0} \big(Z_n\leq  k z_0 \exp(S_n-n\epsilon)\ \vert \ (\p_i)_{i=0}^{n-1} \big)  \qquad \qquad \\
& \nonumber \qquad =  \P_{z_0} \big( z_0 \Pi_{i=0}^{n-1} R_i \leq k z_0 \exp(\sum_{i=0}^{n-1} [L_i-\epsilon]) \ \vert \ (\p_i)_{i=0}^{n-1}\big) \\
& \nonumber \qquad =\P_{z_0}\big(\sum_{i=0}^{n-1}
\log (R_i) \leq \log k + \sum_{i=0}^{n-1} [L_i -\epsilon
]  \ \vert \ (\p_i)_{i=0}^{n-1}\big)\\
& \qquad \le  k^{\lambda} \
 \E_{z_0}\big(\exp \{ \lambda \sum_{i=0}^{n-1} [L_i -\epsilon -\log R_i
]\} \ \vert \ (\p_i)_{i=0}^{n-1}\big).\nonumber
\end{align}

Observe that conditionally on $\p_{j}, R_j$ depends on
$(\p_i)_{i=0}^{j}$ and $(Z_0, R_0, R_1, \ldots , R_{j-1})$ only
through $Z_j.$  Furthermore, under $\P_{z_0}$ we have that almost
surely $\forall n \in \N :Z_n \geq z_0$ since $\P(\p(0)>0)=0$. Hence
we get for every $\lambda\geq 0$,
\Bea
&&\P_{z_0} \big(Z_n\leq k z_0 \exp(S_n-n\epsilon)\ \vert \ (\p_i)_{i=0}^{n-1}\big)\\
&& \qquad \leq k^{\lambda} \ \E_{z_0}\big(\exp(\lambda\sum_{i=0}^{n-1} [L_i -\epsilon
-\log(R_i)]) \ \big\vert \ (\p_i)_{i=0}^{n-1}\big)\\
&& \qquad\leq  k^{\lambda} \
\E_{z_0}\Big\{\exp(\lambda\sum_{i=0}^{n-2} [L_i -\epsilon
-\log(R_i)])  \\ && \qquad \qquad \times \E_{z_0} \big[ \exp(\lambda
[L_{n-1} -\epsilon
-\log(R_{n-1})]) \ | \p_{n-1}, Z_{n-1} \big] \ \Big\vert \ (\p_i)_{i=0}^{n-1}\Big\}\\
& &  \qquad \leq  k^{\lambda} \  \E_{z_0}\big(\exp(\lambda\sum_{i=0}^{n-2} [L_i -\epsilon
-\log(R_i)]) \ \big\vert \ (\p_i)_{i=0}^{n-2}\big) M_{z_0}(\lambda,\p_{n-1}) \\
& &  \qquad \leq  ... \\
& &  \qquad \leq  k^{\lambda} \  \Pi_{i=0}^{n-1} M_{z_0}(\lambda,
\p_i). \Eea

From Lemma \ref{L:majoration de M} we can find  $\alpha \in (0,1),
\lambda_0 \in (0,1)$  and $\exists N \in \N$ such that almost surely
$\forall i \in \N, \, M_{N}(\lambda_0,\p_i) \leq 1-\alpha.$ Hence,
for all $z_0 \ge N$ we have, \be \label{lamajorat} \P_{z_0} (Z_n
\leq k z_0 \exp(S_n-n\epsilon) \ \vert \ (\p_i)_{i=1}^n)\leq
k^{\lambda_0} \prod_{i=1}^n M_{z_0}(\lambda_0, \p_i) \leq
k^{\lambda_0} (1-\alpha)^n \quad \t{a.s.} \ee which proves
(\ref{E:preuve de cost of dev 1}) and we can now prove Lemma \ref{L:cost
of dev from env}. Let $\eta>0$ and fix  $k\in \N$ such that
$(1-\alpha)^k\leq \eta$. Then for every $z_0\geq kN$, using
successively that, conditionally on $(\p_i)_{i=0}^{n-1}$,  $Z_n$
increases when the initial number of individual increases,  and that
$Z_n$ starting from  a population of $k$ groups of $[z_0/k]$
individuals is the sum of $k$ iid variables distributed as
$\P_{[z_0/k]}(Z_n \in .)$, we get \Bea
&& \P_{z_0}(Z_n\leq z_0 \exp(S_n-n\epsilon) \ \vert \ (\p_i)_{i=0}^{n-1}) \\
&& \qquad \leq  \P_{k[z_0/k]}(Z_n\leq z_0 \exp(S_n-n\epsilon) \ \vert \ (\p_i)_{i=0}^{n-1})\\
&& \qquad \leq  \P_{[z_0/k]}(Z_n\leq  z_0  \exp(S_n-n\epsilon) \ \vert \ (\p_i)_{i=0}^{n-1})^k\\
&& \qquad \leq  \P_{[z_0/k]}(Z_n\leq  (k+1)[z_0/k] \exp(S_n-n\epsilon) \ \vert \ (\p_i)_{i=0}^{n-1})^k\\
&& \qquad \leq  (k+1)^{\lambda_0k}(1-\alpha)^{kn}, \Eea using
(\ref{lamajorat}). This  completes the proof of  Lemma \ref{L:cost
of dev from env} with $D=(k+1)^{\lambda_0 k}$.
\end{proof}
$\newline$

We now prove Lemma \ref{L:majoration de M}.
\begin{proof}
Observe that the $(\Lambda_{q}(\lambda ,\p_i)_{i\in\N}$ are iid with common
distribution

$$\Lambda_q(\lambda) =
\Lambda_{\lambda}(\p_0,q) = \E(\exp(\lambda [L_0 -\epsilon -\log
R_0]) \ | \p_0, \ Z_0=q).
$$

By Taylor's formula, for every $\lambda\geq 0$, there exists
$c_\lambda\in [0,\lambda]$ such that \be \label{eqone} \Lambda_{q}
(\lambda)=1+\lambda\E\big( L_0 -\epsilon -\log(R_0) \ \vert \ \p_0,
\ Z_0=q \big)+\lambda^2 \Lambda_{q} ''(c_{\lambda}). \ee

Let us first show that we can find $N$ such that for everey $q \ge
N$, \be \label{eqtwo} \E \big(L_0 -\epsilon -\log(R_0) \ \vert \
\p_0, \ Z_{0}=q \big) \le -\epsilon/2. \ee Observe that  $m :=
m(\p_0)=\exp(L_0)$ and $R_0$ are both bigger than 1 almost surely so $|\log
(R_0) - L_0|< |R_0 - m|$ and hence
\begin{align*}
 \big| \E\big[ \log(R_0) - L_0 \ | \ \p_0 , Z_0=q  \big]  \big| & \le \big| \E\big[ R_0 - m \ | \ \p_0 , Z_0=q  \big]  \big| \\
 &\le \E\big[ |R_0 - m| \ \big| \ \p_0 , Z_0=q  \big]   \\
 & \le \E\big[ (R_0 - m)^2 \ \big| \ \p_0 , Z_0=q  \big]^{1/2}   \\
 & = \text{Var} (R_0 \ \big| \ \p_0, Z_0=q  \big)^{1/2}   \\
 &= \left(\frac1q \text{Var}_{\p_i} \right)^{1/2},
\end{align*}
using that conditionally on $Z_0=q$ and $\p_0=p$,
$R_0=q^{-1}\sum_{j=1}^q X_j$ where $(X_j)_{j=1, \ldots ,q}$ are iid
with common law $p$. By hypothesis $\H, \text{Var}_{\p_0}$ is
bounded so there exists $N\in\N$ such that for every $q\geq N$,
$$\big| \E\big[ \log(R_0) - L_0 | \p_0 , Z_0=q  \big]  \big| \le \epsilon/2 \qquad \text{ a.s}.$$

To bound $\Lambda''_{q}(\lambda)$, observe that for any $\lambda \in
[0,1]$, \Bea \Lambda_{q} ''(\lambda) &=& \E\Big[  (L_0 - \epsilon -
\log R_0)^2e^{\lambda(L_0 - \epsilon-  \log R_0)} \big| \ \p_0, \ Z_0=q
\Big]
\\
&\leq &\E\Big[ (\log  m -\epsilon -\log(R_0))^2 m \ \big\vert \
\p_0, \ Z_{0}=q \Big]
\\
& \leq  & m \E\Big[ 4((\log m)^2 +\epsilon^2+\log(R_0)^2 ) \ \big
\vert \ \p_0, \ Z_{0}=q \Big].
\\
&\leq & 4A \Big[ \t{esssup}((\log m(\p))^2) +\epsilon^2 + \E\big(
\log(R_0)^2 \ \vert \ \p_0, \ Z_{0}=q \big)\Big] \qquad \text{a.s.},
\Eea where $A$ is the constant from $\H$. Then, denoting by
$(N_j)_{j\in\N}$  iid r.v. with common law $\p_0$,
 observe that
\begin{align*}
  \E\big( \log(R_0)^2 \ \vert \ \p_0, \ Z_{0}=q \big) &\le  \E\big( (R_0-1)^2
\ \vert \ \p_0, \ Z_{i}=q \big)
\\ &\le 1 + \E\big( R_0^2  \ \vert \ \p_0, \ Z_{0}=q \big) -2 \E\big(
R_0 \ \vert \ \p_0, \ Z_{0}=q \big) \\ &\le 1 +  \frac2{q^2} \E\big(
\sum_{j=1}^q N_j^2  \ \vert \ \p_0, \ Z_{0}=q \big) -2 m \qquad
\\ &\le 1 +  \frac2{q} B \qquad \text{ a.s.},
\end{align*}
where $B$ is the constant from $\H$. So we can conclude that for all
$\lambda \in [0,1]$ and $q \in \N$
\begin{align*}
\Lambda_{q} ''(\lambda)  &\le M \qquad \text{ a.s.},
\end{align*}
where $M$ is a finite constant. Then,  for all $q\geq N$ and
$\lambda \in [0,1]$,
$$\Lambda_{q} (\lambda)\leq 1-\lambda \epsilon/2 +\lambda^2 M \qquad \text{ a.s.},$$ and thus
$$
M_{N}(\lambda,\p_0) \leq 1-\lambda \epsilon/2 +\lambda^2 M \qquad
\text{ a.s. }.
$$

Choose now $\lambda _0 \in (0,1]$ small enough such that $ \lambda_0\epsilon/2  -\lambda_0^2 M =\alpha>0,$ then
 $M_{N} (\lambda_0, \p_0)\leq 1-\alpha$ $a.s.$ This ends up the proof of Lemma \ref{L:majoration de M}.
\end{proof}

\section{Proof of Theorem \ref{T:trajectoire gauche}}
\label{S:proof}

For each $c<\bar{L}$, we start by giving the rate function for lower deviations and we prove that $(Z_{[nt]})_{t\in[0,1]}$ begins to take large values at time $t_c$. We then show thatno jump occur at time $t_c$
and that $(\log(Z_{[nt]})/n)_{t\in[t_c,1]}$ grows linearly to complete the proof of Theorem \ref{T:trajectoire gauche}. \\

\subsection{Deviation cost and take-off point}
We consider the first time at which the
population reaches the threshold $N$
$$\tau(N) := \inf \{ k : Z_k  >N\}, \quad \tau_n(N)=\min(\tau(N),n).$$
Recalling that by Lemma \ref{L:def de chi},
$$\chi  (c) = \inf_{  t \in [0,1-c/\bar{L}]} \{ - t \log (\E(\p (1))) + (1-t) \psi(c/(1-t))\}$$
and that $t_c$ is the unique minimizer,we have the following statement. \\

\begin{Prop}\label{P:total cost et take-off}
For each $c <\bar L$,
 $$ \lim_{n\rightarrow \infty} -\frac1n \log \P(\log(Z_n)/n\leq c)= \chi(c).$$
Furthermore,  for $N$ large
enough, conditionally on $Z_n \le e^{cn}$,
$$\tau_n(N)/n \stackrel{n\rightarrow \infty}{\longrightarrow } t_c \quad \t{in} \  \P.$$
\end{Prop}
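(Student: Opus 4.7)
The plan is to produce matching bounds $\P(Z_n \le e^{cn}) = \exp(-n\chi(c) + o(n))$ and to deduce concentration of $\tau_n(N)/n$ from the strict convexity of
$$v(t) := -t \log(\E(\p(1))) + (1-t)\psi(c/(1-t)),$$
established in Lemma \ref{L:def de chi}. Throughout, write $\rho := -\log(\E(\p(1)))$.

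\emph{Lower bound on $\P(Z_n \le e^{cn})$.} I use the heuristic trajectory described in the introduction: the process stays at $1$ up to generation $k_* = [t_c n]$ and then takes off. By the Markov property and independence of the environments, with $m = n - k_*$,
$$\P(Z_n \le e^{cn}) \,\ge\, \E(\p(1))^{k_*} \cdot \P_1\bigl(\log(Z_m)/m \le cn/m \bigr).$$
Since $t_c \le 1 - c/\bar L$ (Lemma \ref{L:def de chi}), one has $c/(1-t_c) \le \bar L$, so Proposition \ref{P:minoration} bounds the second factor below by $\exp(-m\,\psi(c/(1-t_c)) + o(n))$; the first factor equals $\exp(-k_*\rho)$, and multiplication yields $\exp(-n\chi(c) + o(n))$.

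\emph{Upper bound on $\P(Z_n \le e^{cn})$.} I decompose on the first time $\tau = \tau(N)$ the population exceeds $N$:
$$\P(Z_n \le e^{cn}) \,\le\, \P(Z_n \le N) + \sum_{k=1}^{n-1} \P\bigl(\tau = k,\, Z_n \le e^{cn}\bigr).$$
The first term is $\exp(-n\rho + o(n))$ by Lemma \ref{L:staying bounded}, and the same lemma yields $\P(\tau = k) \le \exp(-k\rho + O(\log n))$. On $\{\tau = k\}$ the process restarts from $Z_\tau > N$, so for given $\epsilon > 0$ I choose $N$ large enough that Lemma \ref{L:cost of dev from env} applies, giving
$$\P(Z_n \le e^{cn} \mid \mathcal{F}_\tau) \,\le\, \P\bigl(S_n - S_k \le cn - \log Z_\tau + (n-k)\epsilon \,\big|\, \mathcal{F}_\tau\bigr) + D\eta^{n-k}.$$
Choosing $\eta$ sufficiently small makes the error term negligible after summation. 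Combining with (\ref{ldS1}) on the regime $k/n \le 1 - c/\bar L$, and using the trivial bound $\P(\tau = k) \le \exp(-k\rho + o(n)) \le \exp(-n\chi(c)+o(n))$ on the complementary regime (since $(1-c/\bar L)\rho \ge \chi(c)$), I arrive at
$$-\tfrac{1}{n}\log\P(Z_n \le e^{cn}) \,\ge\, \min_{t \in [0,\,1-c/\bar L]} \bigl\{t\rho + (1-t)\psi(c/(1-t) + \epsilon)\bigr\} + o(1),$$
which tends to $\chi(c)$ as $\epsilon \to 0$ by continuity of $\psi$. The main obstacle is that $Z_\tau$ is not bounded a priori, but under $(\H)$ one has $\E[Z_\tau \mid Z_{\tau-1}\le N,\,\p_{\tau-1}] \le NA$, so $\P(\log Z_\tau > \delta n) \le NA\,e^{-\delta n}$ and this extra $\log Z_\tau$ contributes only a subexponential correction.

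\emph{Take-off concentration.} Exactly the decomposition above yields, for any $\delta > 0$,
$$\P\bigl(|\tau_n(N)/n - t_c| \ge \delta,\, Z_n \le e^{cn}\bigr) \,\le\, \exp\Bigl(-n \inf_{|t - t_c| \ge \delta} v(t) + o(n)\Bigr) = \exp\bigl(-n(\chi(c) + \gamma_\delta) + o(n)\bigr),$$
with $\gamma_\delta > 0$ by uniqueness of the minimizer in Lemma \ref{L:def de chi}. Dividing by the lower bound just proved produces a conditional probability $\le \exp(-\tfrac12 n \gamma_\delta)$ for large $n$, hence $\tau_n(N)/n \to t_c$ in $\P$-probability.
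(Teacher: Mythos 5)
Your proof is correct and follows essentially the same route as the paper's: the lower bound comes from the explicit ``stay at $1$ until $[t_c n]$, then deviate via the environments'' trajectory, the upper bound from decomposing on the take-off time $\tau_n(N)$ and combining Lemma \ref{L:staying bounded} with Lemma \ref{L:cost of dev from env} (which the paper packages as Lemma \ref{P:cost with large pop} (i)), and the concentration of $\tau_n(N)/n$ from uniqueness of the minimizer $t_c$ via the restricted bound over $\{|t-t_c|\ge\delta\}$. The only cosmetic difference is that the paper disposes of the unboundedness of $Z_\tau$ by monotonicity in the initial population size (bounding the post-take-off factor by $\P_N(Z_{n-i}\le N e^{cn})$) rather than by your Markov-inequality estimate on $\log Z_\tau$, which is in any case unnecessary since $\log Z_\tau \ge \log N$ only shrinks the event you are bounding.
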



For the proof, we need the following lemma, which tells us that once the
population is above $N$, the cost of a deviation for $(Z_n)_{n\geq 0}$ is
simply the cost the necessary sequence of environments, i.e. the deviation cost for the random walk  $(S_n)_{n\geq 0}.$\\
By decomposing the total probability cost of reaching $nc$ in two pieces (staying bounded until time $nt$ and then having  $(S_n-S_{t_cn})\simeq nc$) and then minimizing over $t$ gives us the correct rate function. The unicity of this minimizer $t_c$ ensures then that the take-off point $\tau_n(N)/n$ converges
to $t_c$. \\

\begin{Lem}\label{P:cost with large pop}
Assume $\H$.
\begin{itemize}
 \item[(i)] For each $\eta>0, \epsilon>0$, there exists $D,N\in\N$ such that for all
 $c\geq 0, \ z_0 \ge N$ and $ n\in\N$,
 \Bea
 \P_{z_0} \big( Z_n \leq  z_0 \exp(cn) \big) \le D( \eta^n +\exp (-n\psi^*(c+ \epsilon ))),
 \Eea
where $\psi^*(x)=\psi(x)$ for $x\leq \bar{L}$ and $\psi^*(x)=0$ for $x\geq \bar{L}$. \\
\item[(ii)] For every $ \epsilon >0$ and for every $c_0 \le \bar L -\epsilon$ such that
$\psi(c_0)<\infty$, there exists $N$ such that for all $z_0\geq N$
and $c \in [c_0, \bar L -\epsilon]$,
$$
\liminf_{n\rightarrow \infty} - \frac1n \log \P_{z_0} ( Z_n \le z_0 e^{cn})  \ge \psi(c
+\epsilon)
$$
and
$$
\limsup_{n\rightarrow \infty} - \frac1n \log \P_{z_0} ( Z_n \le z_0 e^{cn})  \le
\psi(c).
$$
\end{itemize}
\end{Lem}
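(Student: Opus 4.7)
The plan is to prove (i) by splitting the event $\{Z_n \le z_0 e^{cn}\}$ according to whether the environment sequence is atypically unfavourable or not, and then to extract (ii) from (i) combined with Proposition \ref{P:minoration}.

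For part (i), I would write
\begin{align*}
\P_{z_0}(Z_n \le z_0 e^{cn})
 \;\le\; \P\bigl(S_n \le (c+\epsilon)n\bigr)
 \;+\; \P_{z_0}\bigl(Z_n \le z_0 e^{cn},\, S_n > (c+\epsilon) n\bigr).
\end{align*}
The first term is bounded by $\exp(-n\psi^*(c+\epsilon))$: when $c+\epsilon \le \bar L$ this is the Cram\'er-type bound (\ref{E:simple large dev bound lower}), while for $c+\epsilon > \bar L$ the inequality is trivial since $\psi^*(c+\epsilon)=0$. For the second term, observe that on $\{S_n > (c+\epsilon)n\}$ one has $z_0 e^{cn} \le z_0 \exp(S_n - \epsilon n)$, whence this event is contained in $\{Z_n \le z_0 \exp(S_n - \epsilon n)\}$; Lemma \ref{L:cost of dev from env} applied with the given $\epsilon$ and $\eta$ then furnishes constants $N=N(\eta,\epsilon)$ and $D_0$ such that this second probability is at most $D_0 \eta^n$ whenever $z_0 \ge N$. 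Summing and taking $D := D_0 + 1$ yields (i).

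The \emph{limsup} inequality in (ii) drops out at once: since $z_0 \ge 1$ the trivial inclusion $\{\log Z_n/n \le c\} \subset \{Z_n \le z_0 e^{cn}\}$ combined with Proposition \ref{P:minoration} gives $\limsup_n -\tfrac1n \log \P_{z_0}(Z_n \le z_0 e^{cn}) \le \psi(c)$. The \emph{liminf} is obtained by tuning $\eta$ in part (i) uniformly in $c$. Since $\psi$ is decreasing on $(-\infty, \bar L]$ and $\psi(c_0)<\infty$, we have $\psi(c+\epsilon) \le \psi(c_0+\epsilon) \le \psi(c_0) < \infty$ for every $c \in [c_0, \bar L - \epsilon]$. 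Picking any $\eta$ with $-\log \eta > \psi(c_0+\epsilon)$ and applying part (i) provides a single $N$ (depending only on $c_0$ and $\epsilon$) such that
\[
\P_{z_0}(Z_n \le z_0 e^{cn}) \;\le\; D\bigl(\eta^n + \exp(-n\psi(c+\epsilon))\bigr), \qquad z_0 \ge N.
\]
Since $\eta^n = o(\exp(-n\psi(c+\epsilon)))$ uniformly in $c \in [c_0, \bar L -\epsilon]$, the announced bound $\liminf_n -\tfrac1n \log \P_{z_0}(Z_n \le z_0 e^{cn}) \ge \psi(c+\epsilon)$ follows.

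No substantial obstacle remains once Lemma \ref{L:cost of dev from env} is in place: the whole argument reduces to a routine combination of that lemma with the Cram\'er bound for $S_n$ and the upper rate function already supplied by Proposition \ref{P:minoration}. The only bookkeeping subtlety is ensuring that the constants $N$, $D$ and the tilting parameter $\eta$ can be chosen uniformly over $c \in [c_0, \bar L -\epsilon]$, which is automatic from the monotonicity of $\psi$ on the left of $\bar L$.
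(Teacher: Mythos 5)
Your proof is correct and follows essentially the same route as the paper's: the same decomposition of $\{Z_n\le z_0e^{cn}\}$ according to whether $S_n\le (c+\epsilon)n$, the same appeal to Lemma \ref{L:cost of dev from env} and to the Cram\'er bound (\ref{E:simple large dev bound lower}) for part (i), and the same derivation of (ii) from (i), Proposition \ref{P:minoration}, and a choice of $\eta$ small enough uniformly over $c\in[c_0,\bar L-\epsilon]$ using the monotonicity of $\psi$.
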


\begin{proof}
For each $z_0 \in \N, c \le \bar L, n \in \N$ and $\epsilon>0$,
\Bea
&&\P_{z_0}\big( Z_n\leq z_0 \exp(cn) \big) \\
&\qquad & \ \ \leq\P_{z_0}\big(Z_n\leq z_0\exp(cn), \ S_n-n\epsilon\geq cn \big)+ \P_{z_0}\big( S_n-n\epsilon\leq cn \big) \\
&\qquad & \ \ \leq \P_{z_0}\big( Z_n\leq z_0\exp(S_n-n\epsilon)\big)
+\P_{z_0}\big( S_n \leq (c+\epsilon) n\big). \Eea Let $\eta>0$, then
by Lemma \ref{L:cost of dev from env} and (\ref{E:simple large dev
bound lower}), there $\exists D,N:=D(\epsilon, \eta),
N(\epsilon,\eta)$ such that for all $c\leq \bar{L}-\epsilon$,
$z_0\geq N$,
$$
\P_{z_0}\big( Z_n\leq z_0 \exp(cn ) \big) \\
\leq D\eta^n+\exp(-n\psi^*(c+\epsilon)).
$$
which yields (i). \\

The first part of (ii) is an easy consequence of (i) by taking $\eta
< \inf \{ \exp (-\psi(c)), c \in [c_0, \bar L - \epsilon]\}$. The
second part comes directly from Proposition \ref{P:minoration}.
\end{proof}
$\newline$

\begin{proof}[Proof of Proposition \ref{P:total cost et take-off}]

If $\E(\p (1))=0$ then $\chi(c) = \psi(c)$ and $t_c=0$. Noting that $Z_n\geq 2^n \ a.s.$ gives directly
the second part of the lemma, while  the first part follows essentially from
Lemma \ref{P:cost with large pop} (ii).  \\

We suppose now that $\E(\p (1))>0.$ For each $c \le \bar L$
and   $i=1,\ldots, n$, we have for every $z_0\in\N$,
 \Bea
\P(\tau_n(N)=i, \  \  Z_n\leq \exp(cn)) & \leq &\P(Z_{i-1}\leq N)\P_N(Z_{n-i}\leq \exp(cn)) \\
& \leq  & \P(Z_{i-1}\leq N) \P_N(Z_{n-i}\leq N\exp(cn)).
\Eea
Using Lemma \ref{P:cost with large pop} and  Lemma \ref{L:staying bounded}, for all $\eta>0$ and $\epsilon>0$, there exists $N,M\in\N$ such that for
all $z_0 \ge N$,
 \Bea
\P(\tau_n(N)=i, \  \  Z_n\leq \exp(cn))
& \leq & M n^N \E(\p (1))^i
[\eta^{n-i}+\exp(-(n-i)\psi^*(cn/(n-i)+\epsilon)]. \Eea
Summing over $i$ leads to
\begin{align*}
\P(\log(Z_n)/n\leq c) &= \sum_{i=1}^n \P(\tau_n(N) =i, \ \log(Z_n)/n\leq c )\\
&\le \sum_{i=1}^n M n^N  \E(\p (1))^i
[\eta^{n-i}+\exp(-(n-i)\psi^*(cn/(n-i)+\epsilon)].
\end{align*}
Thus
\begin{align*}
 &\liminf_{n\rightarrow \infty} - \frac1n \log \P(\log(Z_n)/n\leq c) \\
& \qquad \ge \liminf_{n\rightarrow \infty} -\frac1n   \log \left( \E(\p (1))^i  [\eta^{n-i}+\exp(-(n-i)\psi^*(cn/(n-i)+\epsilon))]  \right) \\
& \qquad  \ge \liminf - \max_{i=1,\ldots, n}\left[ \frac{i}{n} \log \E(\p (1)) + \frac{n-i}{n} \log(\eta +  \exp(-\psi^*(cn/(n-i)+\epsilon)) )   \right] \\
& \qquad  \ge \inf_{t \in [0,1]}  \left\{ -t \log (\E(\p(1))) -(1-t)
\log (\eta +\exp(-\psi^*(c/(1-t) +\epsilon)))  \right\}.
\end{align*}
where, from the second to the third line, we have used that $a^n +
b^n \le (a+b)^n$ when $a, b \ge 0.$ Letting $\eta,\epsilon \to 0$,
we see that
\begin{align}
\liminf -  \frac1n \log \P(\log(Z_n)/n\leq c) & \ge \inf_{t \in
[0,1]}
\left\{ -t \log \E(\p(1)) + (1-t) \psi^*(c/(1-t))  \right\}\nonumber \\
&\geq  \inf_{t \in [0,1-c/\bar{L}]}
\left\{ -t \log \E(\p(1)) + (1-t) \psi(c/(1-t))  \right\} \nonumber \\
&=
\label{liminf} \chi(c),
\end{align}
where the two infimums coincide since $\psi^*(c/(1-t))=0$ as soon as
$t\geq 1- c/\bar{L}$. \\

More generally, given $0\le a < b \le 1$ it is an easy adaptation of
the above argument to show that
\bea
&&\liminf -\frac1n \log \P(\log(Z_n)/n\leq c, \tau_n(N)/n \in [a,b]) \nonumber \\
&& \qquad \qquad \qquad \label{ab}
\ge \inf_{t \in [a,b]\cap [0,1-c/\bar{L}]} \left\{ -t \log \E(\p(1)) + (1-t)
\psi(c/(1-t))  \right\} .
\eea

The upper bound is much easier since it is enough to exhibit a
trajectory having $\chi(c)$ as it asymptotic cost. By construction
it should be clear that
\begin{align*}
 \P(Z_{[t_c n]}=1 , Z_n \le e^{cn} ) &= \P(Z_{[t_c
 n]}=1)\P(Z_{n-[t_cn]} \le e^{cn})
\end{align*}
By Lemma \ref{L:staying bounded} and Proposition \ref{P:minoration},
\Bea
\limsup_n - \frac1n \log \P(Z_{[t_c n]}=1 , Z_n \le e^{cn} )
&\le & -t_c \log \E(\p(1)) + (1-t_c)
\psi(c/(1-t_c)) \\
&=&\chi(c).
\Eea
Combining this inequality with the lower bound given by (\ref{liminf}), this concludes the proof of the first point of Proposition \ref{P:total cost et take-off}. \\

For the convergence of $\tau_n(N)/n \to t_c,$ observe that by
Lemma \ref{L:def de chi}, $t_c$ is the unique minimizer of $t \in [0,1]
\mapsto \left\{ -t \log \E(\p(1)) + (1-t) \psi(c/(1-t))  \right\}$.
 Hence, if $t_c \not \in (a,b)$ we have
$$
 \inf_{t \in [a,b]\cap [0,1-c/\bar{L}]}
 \left\{ -t \log \E(\p(1)) + (1-t) \psi(c/(1-t))  \right\}\\
 >
\chi(c).
$$
This  means by (\ref{liminf}) and (\ref{ab}) that conditionally on $Z_n \le e^{cn}$ the
event $\tau_n(N)/n \in (a,b)$ becomes negligible with respect to the
event $\tau_n(N)/n \in [t_c -\epsilon, t_c +\epsilon]$ for
any $\epsilon>0$. This proves that $\tau_n(N)/n \to_p t_c.$
\end{proof}
$\newline$

\noindent Proposition \ref{P:total cost et take-off} already proves half of
Theorem \ref{T:trajectoire gauche}. We now proceed to the proof of the path behavior. Define a process $ t \mapsto Y^{(n)}(t)$ for $t \in [0,1]$ by
$$Y^{(n)}_t =
\frac1n \log (Z_{[nt]}).$$
 The second part of Theorem
\ref{T:trajectoire gauche} tells us that $Y^{(n)}(t)$ converges to
$f_c$ in probability in the sense of the uniform norm. To prove this
we need two more ingredients, first we need to show that after time
$\tau_n(N)/n\simeq t_c$ the trajectory of $Y^{(n)}(t)$ converges to a straight
line (this is the object of the following section \ref{S:straight
line}) and then that $Y^{(n)}$ does not jump  at time
$\tau_n(N)/n$ (in section \ref{S:fin preuve}).\\


\subsection{Trajectories in large populations}\label{S:straight
line}

The following proposition shows that for a large enough initial
population and conditionally on $Y^{(n)}(1) < c$ the process
$Y^{(n)}$ converges to the deterministic function $t \mapsto ct.$

\begin{Prop}\label{P:la ligne droite}
For all $c<\bar{L}$  and  $\epsilon>0$, there exists $N \in \N$, such that for $z_0
\ge N$,
$$\lim_{n\rightarrow \infty} \P_{z_0}\big(\sup_{x \in [0,1]} \{  |Y^{(n)}(x) -cx|
 \geq \epsilon \ \vert \ Z_n\leq z_0 \exp(cn) \big)=0.$$
\end{Prop}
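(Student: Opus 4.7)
The plan is to establish Proposition \ref{P:la ligne droite} in two phases. First I sandwich the BPRE trajectory $Y^{(n)}(t)=\tfrac{1}{n}\log Z_{[nt]}$ between the environment random walk trajectory $\tfrac{1}{n} S_{[nt]}$ (up to a vanishing additive $\tfrac{1}{n}\log z_0$) uniformly in $t\in [0,1]$. Then I show that conditionally on $Z_n \le z_0 e^{cn}$, which by the sandwich is essentially the event $\{S_n/n \le c\}$, the random walk trajectory $t \mapsto \tfrac{1}{n}S_{[nt]}$ concentrates uniformly on the line $t\mapsto ct$.

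For the sandwich, fix $\delta>0$ and a grid $0=s_0<s_1<\ldots<s_K=1$ of mesh $\le \delta$. Hypothesis $(\H)$ combined with $\P(\p(0)=0)=1$ gives $L_i \in [0,\log A]$ almost surely, so $t\mapsto \tfrac{1}{n}S_{[nt]}$ is $(\log A)$-Lipschitz while $t\mapsto Y^{(n)}(t)$ is non-decreasing. For each grid point $s_k>0$, Lemma \ref{L:cost of dev from env} yields
\[
\P_{z_0}\!\left(Z_{[ns_k]}\le z_0 e^{S_{[ns_k]}-n\delta}\right)\le D\eta^{n s_k}
\]
for $z_0\ge N(\delta,\eta)$, while the conditional Markov inequality applied with $\E_{z_0}(Z_{[ns_k]}\mid \p_0,\ldots,\p_{[ns_k]-1})= z_0 e^{S_{[ns_k]}}$ gives
\[
\P_{z_0}\!\left(Z_{[ns_k]}\ge z_0 e^{S_{[ns_k]}+n\delta}\right)\le e^{-n\delta}.
\]
A union bound over the $K$ grid points, with control between grid points provided by the Lipschitz/monotonicity properties, yields an event $G_n$ of probability $\ge 1 - K(D\eta^{n\delta}+e^{-n\delta})$ on which $\sup_t |Y^{(n)}(t) - \tfrac{1}{n}\log z_0 - \tfrac{1}{n}S_{[nt]}| \le C\delta$. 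Picking $\eta<e^{-\psi(c)}$ makes $\P_{z_0}(G_n^c)$ exponentially smaller than $\P_{z_0}(Z_n\le z_0 e^{cn}) = e^{-n\psi(c)+o(n)}$, the latter coming from Lemma \ref{P:cost with large pop}(ii).

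For Phase 2, on $G_n$ the event $\{Z_n\le z_0 e^{cn}\}$ implies $S_n/n \le c+C\delta+o(1)$. For a grid point $s_k$ bounded away from $0$ and $1$, consider the upper deviation $\{S_{[ns_k]}/n \ge cs_k+\epsilon\}$ intersected with $\{S_n/n \le c+C\delta\}$: independence of the increments of $S$ together with the large-deviation bounds \eqref{ldS1}--\eqref{ldS2} give
\[
\P\!\left(S_{[ns_k]}/n \ge cs_k+\epsilon,\; S_n/n \le c+C\delta\right) \le \exp\!\Big(-n s_k\,\psi(c+\epsilon/s_k) - n(1-s_k)\,\psi^*\big(c-(\epsilon-C\delta)/(1-s_k)\big)\Big).
\]
Since $s_k(c+\epsilon/s_k)+(1-s_k)(c-(\epsilon-C\delta)/(1-s_k)) = c+C\delta$, convexity of $\psi$ gives that this exponent is at least $\psi(c+C\delta)$, and \emph{strict} convexity of $\psi$ on $\{\psi<\infty\}$ upgrades this to an excess of some $\kappa(\epsilon)>0$ over $\psi(c)$, uniformly in such $s_k$, for $\delta$ small enough depending on $\epsilon$. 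The symmetric lower-deviation event is handled identically. Summing over grid points and dividing by the $e^{-n\psi(c)+o(n)}$ denominator gives a conditional probability of order $e^{-n\kappa(\epsilon)+o(n)}\to 0$.

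The main technical obstacle is the uniformity of the strict-convexity gap $\kappa(\epsilon)$ and the handling of grid points close to $0$ or $1$, where the large-deviation decomposition above degenerates. For $s_k$ near $0$, the a.s. boundedness $L_i\in[0,\log A]$ combined with monotonicity gives $|Y^{(n)}(t)-ct|\le \max(c,\log A)\,s_k < \epsilon/2$ directly on $G_n$; symmetrically, for $s_k$ near $1$ one uses the Lipschitz property together with the constraint $S_n/n \le c+C\delta$ to control the trajectory near the endpoint. Finally $\tfrac{1}{n}\log z_0 \to 0$, so combining Phases 1 and 2 and letting $\delta$ shrink appropriately with $\epsilon$ yields $\sup_{t\in[0,1]}|Y^{(n)}(t) - ct| \le \epsilon$ with conditional probability tending to $1$, proving the proposition.
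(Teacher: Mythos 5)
Your Phase 1 has a genuine gap that breaks the argument. The lower half of the sandwich (ruling out $Z_{[ns_k]}\le z_0e^{S_{[ns_k]}-n\delta}$) is fine: Lemma \ref{L:cost of dev from env} gives a bound $D\eta^{[ns_k]}$ with $\eta$ at your disposal, so you can make it exponentially negligible compared with $\P_{z_0}(Z_n\le z_0e^{cn})\approx e^{-n\psi(c)}$. But the upper half is not. The conditional Markov inequality gives only $\P_{z_0}(Z_{[ns_k]}\ge z_0e^{S_{[ns_k]}+n\delta})\le e^{-n\delta}$, and this term does not involve $\eta$ at all, so "picking $\eta<e^{-\psi(c)}$" does nothing for it. Since $c<\bar L$ forces $\psi(c)>0$, and since $\delta$ must be taken small (of order $\epsilon$) for the sandwich to be useful, you have $e^{-n\delta}\gg e^{-n\psi(c)}$: after dividing by the probability of the conditioning event the bound explodes like $e^{n(\psi(c)-\delta)}$. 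So you have not shown that the upper sandwich holds conditionally on $Z_n\le z_0e^{cn}$, and consequently you have no control on the event $Y^{(n)}(t)>ct+\epsilon$. To rule out an upward overshoot of $Z$ relative to its target you must use the future of the process: if $Y^{(n)}(x)=y>cx+\epsilon$, then either the remaining environments are atypically weak (extra cost $(1-x)\psi((c-y)/(1-x))$) or the process must fall below the product of its means (cost controlled by Lemma \ref{L:cost of dev from env} at an arbitrarily small rate $\eta$). This is exactly what the paper's proof does: it covers the strip $\{(x,y):\,cx+\epsilon\le y\le \bar Lx\}$ by small cells $\theta=[x_l,x_r]\times[y_d,y_l]$ and bounds the probability of visiting $\theta$ and ending below $e^{cn}$ by the product $\P_{z_0}(Y^{(n)}(x_l)\le y_l)\cdot\P_{[e^{ny_d}]}(Y^{(n)}(1-x_r)\le (c-y_d)/(1-x_r))$ — note that \emph{both} factors are lower-deviation probabilities, each controlled by Lemma \ref{P:cost with large pop}, so no upper-deviation bound on $Z$ versus $e^{S}$ is ever needed; strict convexity of $\psi$ then gives an exponent strictly above $\psi(c)$. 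Your proposal is missing this idea.

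A secondary, more repairable issue sits in Phase 2: the bound $\P(S_{[ns_k]}/n\ge cs_k+\epsilon)\le \exp(-ns_k\psi(c+\epsilon/s_k))$ is only valid when $c+\epsilon/s_k\ge\bar L$; for $s_k$ close to $1$ and $\epsilon$ small the threshold lies below the mean and the Chernoff upper-deviation inequality does not apply. You would need to slice over the value of $S_{[ns_k]}$ (or over $a\ge c+\epsilon/s_k$) and optimize the two-block exponent $s_k\psi(a)+(1-s_k)\psi(b)$ under the constraint $s_ka+(1-s_k)b\le c+C\delta$, which again is essentially the cell decomposition of the paper. Your handling of the endpoints via Lemma \ref{L:cost of dev from env} and the a.s. bound $L_i\le\log A$ matches the paper's step (i) and is correct.
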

$\newline$

Before the proof, let us give a little heuristic of this result. Informally, for all $t\in (0,1)$ and $\epsilon>0$,
$$ \P_{z_0}(Y_t^{(n)}= c+\epsilon, \ Z_n \leq \exp(cn))= \P_{z_0}(Z_{[nt]}= \exp(tn(c+\epsilon)))\P_{\exp(tn(c+\epsilon))}
(Z_{n-[nt]}\leq \exp(cn)).$$
Then, for $z_0$ large enough,  Lemma \ref{P:cost with large pop} ensures that
\bea
&&\lim_{n\rightarrow \infty}-\log(\P_{z_0}(Y_t^{(n)}= c+\epsilon, \ Z_n\leq \exp(cn)))/n \nonumber \\
&& \qquad \qquad = t\psi(c+\epsilon)+(1-t)\psi(c-\epsilon t/(1-t)) \label{heuristic} \\
&& \qquad \qquad > \psi(c) \nonumber,
\eea
by strict convexity of $\psi$.  Adding  that $\limsup_n - \frac1n \log \P_{z_0} ( Z_n \le z_0 e^{cn})  \le \psi(c)$ by Proposition $1$  entails that the probability of this event becomes negligible as $n\rightarrow \infty$.

\begin{proof}
Observe that $\{\exists x \in [x_0,x_1] : Y^{(n)}(x) > cx +\epsilon
\} = \{\exists x \in [x_0,x_1] :  Y^{(n)}(x) \in ( cx +\epsilon,
\bar L x ] \}$, because a.s. $ t \mapsto Y^{(n)}(t)$ is an increasing function so that the only way $Y^{(n)}$ can
cross $x \mapsto \bar L x$ downward is continuously. Hence we can
divide the proof in the following steps :
\begin{itemize}
\item[(i)] There exists $0<x_0<x_1<1$ such that for every $\epsilon>0$ and
 for $z_0$ large
enough $\lim_{n\rightarrow \infty} \P_{z_0}\big(\sup_{x \not \in
[x_0,x_1]} \{ |Y^{(n)}(x) -cx|
 \geq \epsilon \ \vert \ Z_n\leq z_0 \exp(cn) \big)=0.$

\item[(ii)]  We show that for $z_0$ large enough  $\lim_{n\rightarrow \infty}
\P_{z_0}\big(\exists x \in [x_0,x_1] : cx +\epsilon \leq  Y^{(n)}(x) \leq \bar{L}x   \
\vert \ Z_n\leq z_0 \exp(cn) \big)=0$.

\item[(iii)] The fact that for $z_0$ large enough $\lim_{n\rightarrow \infty}
\P_{z_0}\big(\exists x \in [x_0,x_1] : Y^{(n)}(x) \leq  cx -\epsilon  \
\vert \ Z_n\leq z_0 \exp(cn) \big)=0$ then follows from the same
arguments as in (ii).
\end{itemize}

\medskip
We start by proving (ii) which is the key point. We can assume $\epsilon < (\bar L -c ) x_0$
and  $\epsilon < (\bar L -c )(1-x_1)$ and we define
$$R_c := \{ (x,y) : x \in [x_0,x_1],  y \in [cx+\epsilon, cx]\}.$$
We know
from Lemma \ref{P:cost with large pop} that $ \limsup_{n\rightarrow \infty}
-\frac1n \log \P_{z_0} ( Z_n\leq z_0 \exp(cn) ) \le \psi(c)$ (for
$z_0$ large enough). Hence, we will have proved the result if we
show that for $z_0$ large enough
\begin{align}\label{E:ineq pour (ii)}
\liminf_{n\rightarrow \infty} -\frac1n \log \P_{z_0}\big(\exists x \in [x_0,x_1] :
(x,Y^{(n)}(x)) \in R_c ,  Z_n\leq z_0e^{cn} \big) > \psi(c).
\end{align}

 Lemma \ref{P:cost with large pop} or heuristic (\ref{heuristic}) suggest that
 the asymptotic cost of the event
$\{ Y^{(n)}(x) =y, Y^{(n)}(1) <c \}$ is given  by the map
$$
 x,y \in
[0,1] \mapsto  x \psi(y/x) + (1-x) \psi((c-y)/(1-x)).
$$
More precisely, consider a cell $\theta=[x_l,x_r] \times [y_d, y_l]
\subset R_c$ and define for every $\eta\geq 0$,
$$C_{c,\eta}(\theta)  := x_l\psi(y_d/x_l+\eta) +(1-x_r)
\psi((c-y_d)/(1-x_r)+\eta).$$
 Observe that
$$
\{ \exists x : (x,Y^{(n)}(x) ) \in \theta \} \subset  \{Y^{(n)}(x_l) \le
y_l\} \cap \{ Y^{(n)}(x_d) \geq y_d \},
$$
so using the Markov property and the fact that $z_0 \mapsto
\P_{z_0}(Y^{(n)}(1) \le c)$ is decreasing
\begin{align*}
& \P_{z_0} ( \exists x : (x,Y^{(n)}(x) ) \in \theta, \ Y^{(n)}(1)  \le
c) \\ & \qquad \leq  \P_{z_0}(Y^{(n)}(x_l) \le y_l ,  Y^{(n)}(x_r) \geq y_d,
Y^{(n)} (1) - Y^{(n)}(x_r) \le c - Y^{(n)}(x_r))
\\ & \qquad \le \P_{z_0}(Y^{(n)}(x_l) \le y_l ) \sup_{y \ge y_d} \P_{[\exp{ny}]}( Y^{(n)}(1-x_r) \le (c - y)/(1-x_r))
\\ & \qquad \le \P_{z_0}(Y^{(n)}(x_l) \le y_l ) \P_{[\exp{ny_d}]}( Y^{(n)}(1-x_r) \le (c - y_d)/(1-x_r))
\end{align*}
Hence, using Lemma \ref{P:cost with large pop} (ii), we see that for every $\eta>0$ small enough, there exists  $
N(\eta,\theta)$ large enough such that for every $z_0\geq N(\eta,\theta)$,
\begin{align}
\nonumber
\liminf_{n\rightarrow \infty} -\frac1n \log \P_{z_0}(\exists x\in [0,1] : (x,Y^{(n)}(x)) \in \theta, Y^{(n)}(1)<c )\geq C_{c,\eta}(\theta).
\end{align}
By continuity of $\eta,\theta \rightarrow C_{\eta, c}(\theta)$,
$$\inf_{\substack{\theta \subset R_c, \\  \t{diam} (\theta)\leq \delta}} \big\{ C_{\eta,c}(\theta) \big\} \stackrel{\delta, \eta \rightarrow 0}{\longrightarrow } \inf_{z \in R_c} \{C_{0,c}(\{z\})\}.$$
Moreover for every $z=(x,y) \in R_c$, $x\in[x_0,x_1]$ and $y/x>c$, so by strict convexity of $\psi$,
$$C_{0,c}(\{z\})=x\psi(y/x) +(1-x)
\psi((c-y)/(1-x))>\psi(c).$$
Then $\inf_{z \in R_c} \{C_{0,c}(\{z\})\}>\psi(c)$, and there exists $\delta_0>0$ and $\eta>0$ such that
for every cell $\theta$ whose diameter is less than $\delta_0$, for every $z_0\geq N(\eta,\theta)$,
\be
\label{E:cost of theta}
\liminf_{n\rightarrow \infty} -\frac1n \log \P_{z_0}(\exists x\in [0,1] : (x,Y^{(n)}(x)) \in \theta, Y^{(n)}(1)<c )>\psi(c).
\ee

Fix an arbitrary region $R \subset R^\circ_c$ included in the
interior of $R_c$. We can chose $0<\delta\leq \delta_0$
such that there is a cover of $R$ by the union of a finite
collection $\mathcal{K}$ of rectangular regions $[x(i),x(i+1)
]\times [y(j),y(j+1)]$ with $i \in \{1,\ldots , N_{\delta}\}$ and $j
\in \{1,\ldots, N(i)\}$  such that their diameter is never more than
$\delta.$  \\
Observe that for every $z_0\geq 1$,
\begin{align*}
\P_{z_0}( \exists x : (x,Y^{(n)}(x)) \in R' , Y^{(n)} \le c)
&\le \sum_{\theta \in \mathcal{K}} \P_{z_0}( \exists x : (x,Y^{(n)}(x)) \in \theta , Y^{(n)} \le c)  \\
&\le |\mathcal{K}| \sup_{\theta \in \mathcal{K}} \P_{z_0}( \exists x
: (x,Y^{(n)}(x)) \in \theta , Y^{(n)} \le c).
\end{align*}
Then using (\ref{E:cost of theta}) simultaneously for each cell $\theta \in \mathcal{K}$,
we conclude that for every $z_0\geq N=\max \{N(\theta,\eta) : \theta \in \mathcal{K}\}$,
\begin{align*}
& \liminf_{n\rightarrow \infty} -\frac1n \log  \P_{z_0}( \exists x : (x,Y^{(n)}(x)) \in
R' , Y^{(n)} \le c) \\ &\qquad = \min_{\theta \in \mathcal{K}}
\liminf_{n\rightarrow \infty}  - \frac1n\log  \P_{z_0}( \exists x : (x,Y^{(n)}(x)) \in
\theta , Y^{(n)} \le c) \\ &\qquad > \psi(c).
\end{align*}
As $R'$ is arbitrary in the interior of $R_c$ this concludes the
proof of (\ref{E:ineq pour (ii)}) and  (ii). \\

\medskip

Let us now proceed with the proof of (i). Recall that under hypothesis $\H, \P(L
>\log A) =0 $ (i.e. the support of $L$ is bounded by $ \log A.$) Fix
$\zeta >0$ and take $x_0,x_1$ such that  $\epsilon / x_0 >A+ \zeta,
x_0 c < \epsilon$  and $ c + \epsilon/(1- x_1) >A+\zeta, \epsilon > c
(1-x_1).$

\begin{align*}
&\P_{z_0} \big( \exists x \le x_0 : |Y^{(n)}(x) - cx| >\epsilon, Y^{(n)}(1) \leq  c  \big) \\
& \le \P_{z_0} \big( \exists x \le x_0 : Y^{(n)}(x) - cx >\epsilon\big) \\
& \le \P_{z_0} \big(   Y^{(n)}(x_0) >\epsilon\big) \\
& \le \P_{z_0} \big(   Y^{(n)}(x_0) >  x_0(A+\zeta) \big) \\
& \le \P_{z_0} \big(   \log( Z_{[nx_0]}) >  S_{[n x_0]} + \zeta n x_0
\big)
\end{align*}
since $n x_0 (A+\zeta) - S_{n x_0} > \zeta n x_0.$ Hence this
requires a ``deviation from the environments" and by Lemma
\ref{L:cost of dev from env} for $\eta$ fixed, there exists $D\geq 0$ such that for $z_0$
large enough,
$$
P_{z_0} \big( \exists x \le x_0 : |Y^{(n)}(x) - cx| >\epsilon,
Y^{(n)}(1) < c + \log z_0/n \big) \leq D \eta^{nx_0}.
$$
Picking $\eta$ small enough ensures that this is in $o(\exp (-n
\psi(c))).$ The argument for the $[x_1, 1]$ part of the interval is
similar. Thus, recalling that $ \limsup_{n\rightarrow \infty}
-\frac1n \log \P_{z_0} ( Z_n\leq z_0 \exp(cn) ) \le \psi(c)$ for $z_0$ large enough, we get $(i)$.
\end{proof}
$\newline$

We can also prove the following stronger result. For every $c<\bar{L}$, for every $\epsilon>0$, there exists $N \in \N$ and $\alpha>0$, such that for $z_0
\ge N$,
\be
\label{limunif}
\lim_{n\rightarrow \infty} \sup_{c'\in [c-\alpha,c+\alpha]} \P_{z_0}\big(\sup_{x \in [0,1]} \{  |Y^{(n)}(x) -c'x|
 \geq \epsilon \ \vert \ Z_n\leq z_0 \exp(c'n) \big)=0.
 \ee
Indeed the proof of Lemma \ref{P:cost with large pop} (ii) also ensures that
for every $ \epsilon >0$ and for every $c_0 \le \bar L -\epsilon$ such that
$\psi(c_0)<\infty$
there exists $N$ such that for $z_0\geq N$,
$$
\liminf_{n\rightarrow \infty} \inf_{c\in [c_0,\bar{L}]} \{ - \frac1n \log \P_{z_0} ( Z_n \le z_0 e^{cn})
- \psi(c
+\epsilon)\}\geq 0.
$$
Then, following the proof of (ii) above with now
$$\inf_{c\in [c_0,\bar{L}]} \{\inf_{\substack{\theta \subset R_c, \\  \t{diam} (\theta)\leq \delta}} \big\{ C_{\eta,c}(\theta) \big\} - \inf_{z \in R_c} \{C_{0,c}(\{z\})\} \} \stackrel{\delta, \eta \rightarrow 0}{\longrightarrow}0,$$
there exists $\delta_0>0$ and $\eta>0$ such that
for every cell $\theta$ whose diameter is less than $\delta_0$, for every $z_0\geq N(\eta,\theta)$, (\ref{E:cost of theta})
becomes
$$
\beta=\liminf_{n\rightarrow \infty} \inf_{c\in[c_0,\bar{L}]}\{-\frac1n \log \P_{z_0}(\exists x\in [0,1] : (x,Y^{(n)}(x)) \in \theta, Y^{(n)}(1)<c )-\psi(c)\}>0.
$$
Moreover for every $\epsilon>0$,
\Bea
\limsup_{n\rightarrow \infty} \sup_{c'\in[c-\alpha,c+\alpha]}
-\frac1n \log \P_{z_0}(Z_n\leq \exp(c'n)) &\leq & \limsup_{n\rightarrow \infty}
-\frac1n \log \P_{z_0}(Z_n\leq \exp((c-\alpha)n))\\
&=&\psi(c-\alpha).
\Eea
Putting the two last  inequalities together with  $\alpha>0$ such that $\psi(c-\alpha)\leq \psi(c+\alpha)+\beta$
and $[c-\alpha,c+\alpha]\subset [c_0,\bar{L}-\epsilon]$ gives (\ref{limunif}).\\

\subsection{End of the proof of Theorem \ref{T:trajectoire gauche}}
\label{S:fin preuve}
We  begin to prove that $(Z_{n})_{n\in\N}$ does not make a big jump when it goes up
to $N$ in the following sense.
\begin{Lem}
\label{saut}
For every $c<\bar{L}$ and $N\in\N$,
$$\sup_{n\in\N} \P(Z_{\tau_n(N)}\geq N+M \ \vert Z_n\leq e^{cn} )  \stackrel{M\rightarrow \infty}{\longrightarrow } 0.$$
\end{Lem}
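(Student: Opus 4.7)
The plan combines three ingredients: the Markov property at the stopping time $\tau(N)$, the monotonicity that $k \mapsto \P_k(Z_m \leq x)$ is non-increasing (since $\p(0)=0$ a.s., starting from more individuals stochastically dominates), and the second-moment control from $(\H)$. Specifically, under $(\H)$ with constants $A, B$, for every $z \leq N$,
\[ \E[Z_i^2 \mid Z_{i-1}=z, \p_{i-1}] = z\sum_k k^2 \p_{i-1}(k) + z(z-1)m(\p_{i-1})^2 \leq NB + N^2 A^2 =: C_N, \]
so Markov's inequality gives, on $\{\tau(N) \geq i\}$,
\[ \P(Z_i \geq N+M \mid \mathcal{F}_{i-1}) \leq \frac{C_N}{(N+M)^2}. \]

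Since $\{Z_{\tau_n(N)} \geq N+M\} \subset \{\tau(N) \leq n\}$ for $M \geq 1$, the Markov property at $\tau(N)$ yields
\[ \P(Z_{\tau_n(N)} \geq N+M, Z_n \leq e^{cn}) = \sum_{i=1}^n \E\bigl[\ind_{\tau(N) = i, Z_i \geq N+M}\, \P_{Z_i}(Z_{n-i} \leq e^{cn})\bigr]. \]
Choosing $K_0$ with $C_N/(N+K_0)^2 \leq 1/2$, for $M \geq K_0$ monotonicity gives $\P_{Z_i}(\cdot) \leq \P_{N+K_0}(\cdot)$ on $\{Z_i \geq N+M\}$, so combining with the second-moment estimate,
\[ \P(Z_{\tau_n(N)} \geq N+M, Z_n \leq e^{cn}) \leq \frac{C_N}{(N+M)^2} \sum_{i=1}^n \P(\tau(N) \geq i)\, \P_{N+K_0}(Z_{n-i} \leq e^{cn}). \]
A matching lower bound for $\P(Z_n \leq e^{cn})$ is obtained by restricting to trajectories with bounded overshoot $Z_{\tau(N)} \leq N+K_0$ and using the reverse monotonicity $\P_{Z_i}(\cdot) \geq \P_{N+K_0}(\cdot)$ on $\{Z_i \leq N+K_0\}$:
\[ \P(Z_n \leq e^{cn}) \geq \sum_{i=1}^n \P(\tau(N)=i, Z_i \leq N+K_0)\, \P_{N+K_0}(Z_{n-i} \leq e^{cn}). \]
Dividing these two displays should yield $\P(Z_{\tau_n(N)} \geq N+M \mid Z_n \leq e^{cn}) \leq C/(N+M)^2$ uniformly in $n$, whence the claim as $M \to \infty$.

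The hard part is making the matching lower bound precise: it reduces to comparing $\sum_i \P(\tau(N) \geq i)\, p_i$ with $\sum_i \P(\tau(N) = i, Z_i \leq N+K_0)\, p_i$ where $p_i = \P_{N+K_0}(Z_{n-i} \leq e^{cn})$, and so to controlling $\P(\tau(N) = i \mid \tau(N) \geq i)$ uniformly from below. This is not automatic when the conditional crossing probability $\P(Z_i > N \mid Z_{i-1}=z)$ can be very small; however, the polynomial-geometric estimate $\P(\tau(N) \geq i) \leq C i^N \E(\p(1))^i$ implicit in Lemma \ref{L:staying bounded}, combined with the integrability of $\tau(N)$ that holds under $\bar L > 0$, provides enough control by a careful averaging to close the argument. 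In the degenerate case $\E(\p(1))=0$ (case (b) of Theorem \ref{T:trajectoire gauche}), $\tau(N)$ is almost surely bounded and the statement is immediate.
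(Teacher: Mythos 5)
Your upper bound on the numerator is fine, but the proof has a genuine gap exactly where you flag ``the hard part'': the matching lower bound on the denominator. You need, uniformly in $n$,
$$\sum_{i=1}^n \P(\tau(N)=i,\,Z_i\le N+K_0)\,p_i \;\ge\; c_0\sum_{i=1}^n \P(\tau(N)\ge i)\,p_i, \qquad p_i=\P_{N+K_0}(Z_{n-i}\le e^{cn}),$$
and this does not follow from the ingredients you cite. There is no uniform lower bound on $\P(\tau(N)=i\mid\tau(N)\ge i)$ (the population can sit between $2$ and $N$ for a long time before crossing), the weights $p_i$ are increasing in $i$ so the two sums may be carried by different ranges of $i$, and the polynomial--geometric bound from Lemma \ref{L:staying bounded} only controls the tail of $\tau(N)$, not this term-by-term ratio. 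Closing the gap along your route would essentially require knowing where the conditional law of $\tau_n(N)$ concentrates, i.e.\ re-proving Proposition \ref{P:total cost et take-off}; as written the argument is incomplete.

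The paper's proof avoids the denominator entirely. Conditioning on $\tau_n(N)=b$ and $Z_{\tau_n(N)-1}=a\le N$ and applying Bayes' formula at the single step where the jump occurs gives
$$\P_N(Z_1\ge N+M\mid Z_{n-b}\le e^{cn})=\frac{\P_N(Z_{n-b}\le e^{cn}\mid Z_1\ge N+M)}{\P_N(Z_{n-b}\le e^{cn})}\,\P_N(Z_1\ge N+M)\le\P_N(Z_1\ge N+M),$$
the last inequality because conditioning on a large value of $Z_1$ can only decrease the probability that $Z_{n-b}$ stays small (the same monotonicity in the initial state that you already use). This yields $\P(Z_{\tau_n(N)}\ge N+M\mid Z_n\le e^{cn})\le\P_N(Z_1\ge N+M)$ uniformly in $n$ and $b$, and $\P_N(Z_1\ge N+M)\to0$ as $M\to\infty$; your second-moment computation under $(\H)$ would even make this quantitative. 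The idea you are missing is to show that the conditioning on $\{Z_n\le e^{cn}\}$ is negatively correlated with the jump, rather than bounding numerator and denominator separately.
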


\begin{proof}
By the Markov property, for any $b$ and $a \le N$ fixed,
\begin{align*}
 & \P(Z_{\tau_n(N)}\geq N+M \ \big \vert \ Z_n\leq e^{cn}, \tau_n(N) =b, Z_ {\tau_n(N)-1} =a
 ) \\ & \qquad = \P_a(Z_{1}\geq N+M \ \big\vert \ Z_{n-b}\leq e^{cn}) \\ & \qquad \le\P_N(Z_{1}\geq N+M \ \big\vert \ Z_{n-b}\leq
 e^{cn})
 \\ & \qquad = \frac{\P_N(Z_{n-b}\leq
 e^{cn} \ \big\vert \ Z_{1}\geq N+M )) \P_N(Z_1 \geq N+M)  }{\P_N(Z_{n-b} \le e^{cn})}
\end{align*}
by Bayes' formula. Observe that
\begin{align*}
\P_N(Z_{n-b}\leq
 e^{cn} \ \big\vert \ Z_{1}\geq N+M )) &\le\P_N(Z_{n-b}\leq
 e^{cn} ),
\end{align*}
so that
\begin{align*}
 \P(Z_{\tau_n(N)}\geq N+M \ \big\vert Z_n\leq e^{cn}, \tau_n(N) =b, Z_ {\tau_n(N)-1} =a
 ) &\le \P_N(Z_{1}\geq N+M )).
\end{align*}
  This is uniform with respect to $a$ and $b$ so that summing over
them yields
$$ \forall n\in\N, \quad  \P(Z_{\tau_n(N)}\geq N+M \ \big\vert Z_n\leq e^{cn} )\leq \P_N(Z_1\geq M+N),$$
which completes the proof letting $M\rightarrow \infty$.
\end{proof}

$\newline$

We can now prove the second part of Theorem  \ref{T:trajectoire gauche} in the case $\P(\p(1)>0)>0$ (case a).
Let $\epsilon,\eta>0$  and $M,N\geq 1$ and note that
\bea
&& \P\big(\sup_{t\in [0,1]} \{\vert \log(Z_{[nt]})/n- f_c(t) \vert  \}\geq  \eta \ \vert \ Z_n\leq e^{cn} \big) \nonumber\\
&\leq & \underset{A_n}{ \underbrace{\P\big(\sup_{t\in [0,1]} \{\vert \log(Z_{[nt]})/n-
f_c(t) \vert  \}\geq  \eta, \
\tau_N/n \in [t_c-\epsilon,t_c+\epsilon], \ Z_{\tau (N)}\leq N+M
\ \vert \ Z_n\leq \exp(cn)\big)}} \nonumber \\
&&
\label{troispieces}
+ \underset{B_n}{\underbrace{\P\big( \tau (N)/n \not \in [t_c-\epsilon,t_c+\epsilon]\ \vert \ Z_n\leq \exp(cn)  \big)}}+ \underset{C_n}{\underbrace{\P\big( Z_{\tau (N)}\geq N+M \vert \ Z_n\leq \exp(cn)  \big)}}.
\eea
$\newline$

Thanks to Lemma \ref{P:cost with large pop} (ii), there exists $N$ large enough so that
$$B_n \stackrel{n\rightarrow \infty}{\longrightarrow}0 .$$

Then, by Lemma \ref{saut}, we can find
$M $ such that for $n$ large enough
$$C_n\leq\epsilon.$$

Finally, for every $\epsilon<\eta/2c$, for $n$ large enough,
$$
\sup_{t\in [0,t_c+\epsilon]} \{\vert \log(N)/n \vert + \vert f_c(t) \vert  \}\leq  \eta/2, $$
so that conditionally on the event $\{\tau_N/n \in [t_c-\epsilon,t_c+\epsilon]\}$,
$$
\sup_{t\in [0,\tau_N/n[} \{\vert \log(Z_{[nt]})/n- f_c(t) \vert  \}<\eta.
$$
Then, fixing $\epsilon>0$ such that
$$ \sup_{t_c-\epsilon\leq \alpha\leq t_c+\epsilon,\ t\in[0,1]} \{ f_c(\alpha+t)-ct/(1-\alpha)\}\leq \eta/2,$$
we have for every $n\in\N$,
\Bea
& A_n & \\
 &\leq &
 \P\big( \sup_{\tau_N/n \leq t\leq 1} \{ \vert \log(Z_{[nt]})/n - f_c(t)   \vert \}
 \geq \eta, \ \tau_N/n \in [t_c-\epsilon,t_c+\epsilon], \ Z_{\tau_N}\leq N+M \ \vert
 \ Z_n\leq \exp(cn)  \big) \\
 &\leq & \sup_{\substack{z_0 \in [N,N+M] \\ t_c-\epsilon\leq  \alpha \leq t_c+\epsilon }}
\P_{z_0}\big( \sup_{t \leq 1-\alpha} \{ \vert \log(Z_{[nt]})/n - f_c(\alpha+t)   \vert \}
 \geq \eta\ \vert \ Z_{[(1-\alpha) n]}\leq \exp(cn)   \big), \\ &\leq & \sup_{\substack{z_0 \in [N,N+M] \\ t_c-\epsilon\leq \alpha \leq t_c+\epsilon }}
\P_{z_0}\big( \sup_{t \leq 1-\alpha} \{ \vert \log(Z_{[nt]})/n - \frac{ct}{1-\alpha}    \vert \}
 \geq \eta/2\ \vert \ Z_{[n (1-\alpha )]}\leq \exp(cn) \big)\\
  &\leq & \sup_{\substack{z_0 \in [N,N+M] \\ c/(1-t_c+\epsilon)\leq  x \leq c/(1-t_c-\epsilon )}}
\P_{z_0}\big( \sup_{t \leq  c/x } \{ \vert \log(Z_{[nt]})/n - xt   \vert \}
 \geq \eta/2 \vert \ Z_{[nc/x]}\leq  \exp(nc/x.x) \big)
  \Eea
By ($\ref{limunif}$),  there exists $\epsilon>0$ such that $A_n \stackrel{n\rightarrow \infty}{\longrightarrow} 0.$ Then using ($\ref{troispieces}$),
$$\P\big(\sup_{t\in [0,1]} \{\vert \log(Z_{[nt]})/n- f_c(t) \vert  \}\geq  \eta \ \vert \ Z_n\leq e^{cn} \big)
\stackrel{n\rightarrow \infty}{\longrightarrow }0.$$
Thus in the case $\P(\p(1)>0)>0$, we  get that
conditionally on   $Z_n \leq e^{ c n }$,
$$\sup_{t\in[0,1]} \{ \big\vert \log(Z_{[tn]})/n- f_c(t) \big\vert  \} \ \stackrel{n\rightarrow \infty}{\longrightarrow }0, \qquad \t{in} \ \P.$$
The case $\P(\p(1)>0)=0$ is easier (and amounts to make $t_c=0$ in the proof above). \\

\section{Proof for upper deviation}
\label{S:proof upper}
Here, we assume that for every $k\geq 1$,
$$\E(Z_1^k)<\infty.$$

\begin{Lem}
\label{lemtechnq}
For every $c\geq \bar{L}$, denoting by
$$s_{max}:=\sup\{ s>1 : \E(m(\p)^{1-s})<1\},$$ we have for every $z_0\geq 1$,
$$\liminf_{n\rightarrow \infty} \inf_{z_0\geq 1} \{-\frac{1}{n}\log\big(\P_{z_0}(Z_n \geq z_0 \exp(cn))\big)
\}\geq \sup_{0\leq \eta\leq c-\bar{L}} \min(s_{max}\eta, \ \psi(c-\eta)).$$
\end{Lem}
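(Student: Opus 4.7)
I would decompose the event $\{Z_n \geq z_0 e^{cn}\}$ according to whether the random walk $S_n$ carries almost all of the exponential growth or whether the BPRE outgrows its environment. For each $\eta \in [0, c-\bar L]$, write
$$
\P_{z_0}(Z_n \geq z_0 e^{cn}) \leq \P(S_n \geq (c-\eta)n) + \P_{z_0}\big(Z_n \geq z_0 e^{cn},\ S_n < (c-\eta)n\big).
$$
Since $c - \eta \geq \bar L$, the first term is at most $\exp(-n \psi(c-\eta))$ by the standard Cram\'er bound \eqref{E:simple large dev bound upper}. The restriction $\eta \leq c - \bar L$ is precisely what makes this piece nontrivial.

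To handle the second term, introduce the normalized variable
$$W_n^{(z_0)} := \frac{Z_n}{z_0\, e^{S_n}},$$
which is a positive, mean-one martingale conditionally on the environment sequence. On $\{S_n < (c-\eta)n\}$, the requirement $Z_n \geq z_0 e^{cn}$ forces $W_n^{(z_0)} \geq e^{n\eta}$, so Markov's inequality with exponent $s \in (1, s_{max})$ yields
$$\P\big(W_n^{(z_0)} \geq e^{n\eta}\big) \leq e^{-sn\eta}\,\E\big((W_n^{(z_0)})^s\big).$$
The central technical input is a uniform moment bound $\sup_{n,z_0} \E\big((W_n^{(z_0)})^s\big) < \infty$ for every $s < s_{max}$. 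Starting from $z_0$ ancestors, $Z_n$ is conditionally a sum of $z_0$ iid copies of $Z_n$ started from a single individual, hence $W_n^{(z_0)} = z_0^{-1}\sum_{i=1}^{z_0} W_n^{(i)}$ with the $W_n^{(i)}$ iid (given env) of law that of $W_n^{(1)}$. By Jensen's inequality applied to the convex map $x \mapsto x^s$, $s \geq 1$, one has $\E((W_n^{(z_0)})^s) \leq \E((W_n^{(1)})^s)$, reducing everything to the one-ancestor case. Then Guivarc'h's theorem \cite{Guiv} provides the $L^s$-convergence $W_n^{(1)} \to W$ for $s < s_{max}$, whose defining condition is precisely $\E(m(\p)^{1-s})<1$; in particular $\sup_n \E((W_n^{(1)})^s) < \infty$. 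The assumption that $Z_1$ has all moments places us inside the scope of that theorem.

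Combining both pieces, for every $\eta \in [0, c-\bar L]$ and every $s \in (1, s_{max})$,
$$\P_{z_0}(Z_n \geq z_0 e^{cn}) \leq e^{-n\psi(c-\eta)} + C_s\, e^{-sn\eta},$$
with $C_s$ independent of both $n$ and $z_0$. Taking $-n^{-1}\log$ and using $-n^{-1}\log(a+b) \geq \min(-n^{-1}\log a, -n^{-1}\log b) - n^{-1}\log 2$, then letting $n \to \infty$, $s \uparrow s_{max}$, and finally taking the supremum over $\eta \in [0, c-\bar L]$ produces the claimed lower bound, uniformly in $z_0 \geq 1$. The main obstacle is precisely the uniform-in-$n$ control of the $s$-th moment of $W_n^{(1)}$; everything else is a bookkeeping optimization between the two competing exponential costs (distorting the environment versus forcing $W_n$ to take an atypically large value).
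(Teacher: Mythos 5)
Your proof is correct and follows essentially the same route as the paper: the same decomposition over $\{S_n \gtrless (c-\eta)n\}$, the Cram\'er bound for the environment term, and Guivarc'h's $L^s$ bound on $W_n$ combined with Markov's inequality for the term where the process outgrows its environments. The only cosmetic difference is that you normalize $W_n$ by $z_0$ and invoke Jensen to reduce to one ancestor, whereas the paper keeps $W_n = e^{-S_n}Z_n$ and bounds $\E_{z_0}(W_n^s)\le z_0^s C_s'$ via the decomposition into $z_0$ conditionally iid copies — the two are equivalent.
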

The first part of Theorem \ref{upperth} is a direct consequence of this lemma. Indeed,  in
the case when $Z_n$ is strongly supercritical, $s_{max}=\infty$, then letting $\eta\downarrow 0$,
 we get, for every $c\geq  \bar{L}$,
$$- \log (\P_1(Z_n \leq e^{ c n }))/n  \stackrel{n\rightarrow \infty}{\longrightarrow } \psi(c).$$
$\newline$

\begin{proof}[Proof of Lemma \ref{lemtechnq}] For every $\eta>0$,
$\P_{z_0}(Z_n \geq z_0 \exp(cn))$ is smaller than
\be
\label{maj1}
\P_{z_0}(Z_n \geq z_0 \exp(cn)), \ S_n\leq n[c-\eta] )+
\P_{z_0}(Z_n \geq z_0 \exp(cn)), \ S_n\geq n[c-\eta] ).
\ee

First, as for every $k\geq 1$, $\E(Z_1^k)<\infty$, by Theorem 3 in \cite{Guiv}, for every $s>1$ such that
$$\E(m(\p)^{1-s})<1,$$
there exists $C_s>0$ such that for every $n\in\N$,
$$\E_1(W_n^s)\leq C_s,$$
where  $W_n=\exp(-S_n)Z_n$. Note that conditionally on the environments $(\p_i)_{i=0}^{n-1}$,
$W_n$ starting from $z_0$ is the sum of $z_0$ iid random variable distributed as $W_n$ starting from $1$. Thus,
 there exists $C_s'$ such that for all $n,z_0\in\N$,
$$\E_{z_0}(W_n^s)\leq z_0^sC_s'.$$
Then,
\bea
\P_{z_0}(Z_n \geq z_0 \exp(cn), \ S_n\leq n[c-\eta] )
 &\leq & \P_{z_0}( Z_n\exp(-S_n) \geq z_0 \exp(n\eta)) \nonumber \\
&=& \P_{z_0}(W_n \geq z_0\exp(n\eta)) \nonumber \\
&\leq & \frac{\E_{z_0}(W_n^s)}{z_0^s\exp(ns\eta)} \nonumber \\
\label{maj2}
&\leq & C_s' \exp(-sn\eta).
\eea

Second, by (\ref{E:simple large dev bound upper}), we have
\be
\label{maj3}
\P_{z_0}(Z_n \geq \exp(cn), \ S_n\geq n[c-\eta] )\leq
\P(S_n\geq n[c-\eta] )\leq \exp(-n\psi(c-\eta)).
\ee

Combining (\ref{maj1}),(\ref{maj2}), and (\ref{maj3})  we get
$$\liminf_{n\rightarrow \infty} \inf_{z_0\geq 1}\{-\log\big(\P_{z_0}(Z_n\geq z_0\exp(cn) \big)\}/n \geq
\min(s\eta, \ \psi(c-\eta)).$$
Thus,
\Bea
\liminf_{n\rightarrow \infty} \inf_{z_0\geq 1}-\log\big(\P(\log(Z_n)/n \geq c)\big)\}/n
\geq  \sup_{0\leq \eta\leq c-\bar{L}} \min(s\eta, \ \psi(c-\eta)).
\Eea
Letting $s\uparrow  s_{max}=\sup\{ s>1 : \E(m(\p)^{1-s})<1\}$ yields the result.
\end{proof}
$\newline$

The proof of the second part of Theorem \ref{upperth} follows the proof of Proposition \ref{P:la ligne droite}.
Roughly speaking, for all $t\in (0,1)$ and $\epsilon>0$,
$$ \P(Z_{[nt]}= \exp(tn(c+\epsilon)), \ Z_n \geq \exp(cn))= \P(Z_{[nt]}= \exp(tn(c+\epsilon)))\P_{\exp(tn(c+\epsilon))}
(Z_{n-[nt]}\geq \exp(cn)).$$
Then the first part of Theorem \ref{upperth} ensures that
\Bea
&&\lim_{n\rightarrow \infty}-\log(\P(Z_{[nt]}= \exp(tn(c+\epsilon)), \ Z_n\geq \exp(cn)))/n\\
&& \qquad \qquad = t\psi(c+\epsilon)+(1-t)\psi(c-t/(1-t)\epsilon) \\
&& \qquad \qquad > \psi(c),
\Eea
by strict convexity of $\psi$. This entails that $\log(Z_{[nt]})/n\rightarrow ct$ as $n\rightarrow \infty$.

\section{Proof without supercritical environments}
\label{S:subcritical}
We assume here that $\P(m(\p)\leq 1)=1$.
Recall that $f_i$ is the probability generating function of $\p_i$ and that, denoting by
$$F_n:=f_{0}\circ \cdots\circ f_{n-1},$$
 we have for every $k\in\N$,
$$\E_k(s^{Z_{n+1}}\ \vert \ \ f_0, ..., \ f_n)=F_{n+1}(s)^k \qquad (0\leq s\leq 1).$$
We assume also that for every $j\geq 1$, there exists $M_j>0$ such that
$$\sum_{k=0}^{\infty} k^j \p(k) \leq M_j \quad \t{a.s.}$$
Then,
$$ f^{(j)}(1)\leq M_j \quad \t{a.s.}$$

We use that for ever $c>1$ and $k\geq 1$, by Markov inequality,
\Bea
\P(Z_n\geq c^n)&=& \P(Z_n(Z_{n}-1)...(Z_{n}-k+1) \geq c^n(c^n-1)...(c^n-k+1)) \\
&\leq &\frac{\E(Z_n(Z_{n}-1)...(Z_{n}-k+1))}{c^n(c^n-1)...(c^n-k+1)}\\
&=& \frac{\E(F_n^{(k)}(1))}{c^n(c^n-1)...(c^n-k+1)}.
\Eea
Thus, to get Proposition \ref{totalrecall}, it is enough to prove that
for every $k>1$,
$$\E(F_n^{(k)}(1))\leq C_kn^{k^k}$$
and let $k\rightarrow \infty$. The last inequality can be directly derived
from the following lemma, since here $f_i'(1)\leq 1$ a.s. and there exists $M_j>0$ such that for every $j\in \N$, $f^{(j)}(1)\leq M_j$ a.s.
\begin{Lem}
Let $(g_i)_{1\leq i\leq n}$ be power series with positive coefficients such that
$$\forall 2\leq i\leq n, \quad g_i(1)=1$$
and denote by
$$G_i=g_{i}\circ ...\circ g_n, \qquad (1\leq i\leq n). $$
Then, for every $k\geq 0$,
$$
\sup_{x\in[0,1]}
G_1^{(k)} (x) \leq  \max_{\substack{0\leq j\leq k \\ 1\leq i\leq n }}
(1,[g_i^{(j)}(1)]^{k^k}).\max_{2\leq i \leq n}(1,g'_i(1))^{nk} .n^{k^k}$$
\end{Lem}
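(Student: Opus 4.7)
The plan is to reduce the supremum to the value at $x=1$, rewrite $G_1^{(k)}(1)$ via the Fa\`a di Bruno formula, and close an induction on $k$ whose exponent $k^k$ is forced by a combinatorial estimate.

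First I would observe that, since each $g_i$ has non-negative coefficients, every derivative $g_i^{(j)}$ is non-negative and non-decreasing on $[0,1]$. A short induction on the number of compositions transfers this monotonicity to every iterated composition $G_i$, and therefore
\[
\sup_{x \in [0,1]} G_1^{(k)}(x) = G_1^{(k)}(1).
\]
The assumption $g_j(1)=1$ for $j \ge 2$ also yields $G_j(1)=1$ for every $j \ge 2$. Applying Fa\`a di Bruno's formula to $G_i = g_i \circ G_{i+1}$ I would then rewrite
\[
G_i^{(k)}(1) = \sum_{\pi} g_i^{(|\pi|)}(1) \prod_{B \in \pi} G_{i+1}^{(|B|)}(1),
\]
where the sum runs over set partitions $\pi$ of $\{1,\dots,k\}$; this is a recursion from the outermost composition inward. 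Isolating the single-block partition $\pi=\{\{1,\dots,k\}\}$, which contributes $g_i'(1)\,G_{i+1}^{(k)}(1)$, leaves a remainder in which every block has size at most $k-1$, hence involving only derivatives of $G_{i+1}$ of strictly lower order.

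Setting $K := \max_{1\le i\le n,\,0\le j\le k} \max(1, g_i^{(j)}(1))$ and $M := \max_{2 \le i \le n} \max(1, g_i'(1))$, the heart of the proof will be an induction on $k$ of the inequality
\[
G_i^{(k)}(1) \le K^{k^k}\, M^{k(n-i+1)}\, (n-i+1)^{k^k}, \qquad 1 \le i \le n.
\]
The base case $k=1$ is direct, since $G_i'(1)=\prod_{j=i}^n g_j'(1) \le K M^{n-i}$. For the inductive step I would bound the remainder term using the induction hypothesis on orders $\le k-1$ together with the key combinatorial inequality (valid for every partition $\pi$ of $\{1,\dots,k\}$ with $|\pi|\ge 2$)
\[
\sum_{B \in \pi} |B|^{|B|} \le (k-1)^{k-1}+1 \le k^k,
\]
which holds because $x \mapsto x^x$ is convex and every block has size at most $k-1$, the maximum being attained at the partition $(k-1,1)$. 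Combined with the crude bound $B_k\le 2^{k^k}$ on the Bell number counting set partitions, this keeps the exponents of $K$ and of $(n-i+1)$ below $k^k$ at each step. The resulting recursion in $i$ takes the form $u_{i+1} \le M\, u_i + D_i$ with $D_i$ polynomially bounded in $i$, which I would iterate from the terminal value $G_n^{(k)}(1)=g_n^{(k)}(1)\le K$; specializing $i=1$ then gives the stated bound.

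The hard part will be the careful bookkeeping of the exponents of $K$, $M$ and $n$ produced by the Fa\`a di Bruno products; the exponent $k^k$ in the statement is just tight enough to absorb the combinatorial increment $(k-1)^{k-1}+1$ arising at each inductive step, which is exactly what makes the induction close.
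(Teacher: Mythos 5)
Your argument is correct in its essentials but follows a genuinely different route from the paper's. The paper does not invoke Fa\`a di Bruno: it first applies the chain rule to write $G_1'=\prod_{i=1}^n g_i'\circ G_{i+1}$ and then differentiates this $n$-fold product $k$ times with the generalized Leibniz rule, summing over compositions $k_1+\cdots+k_n=k$; the factor $n^{k^k}$ comes directly from the count $n^k$ of such compositions, and the induction on $k$ is closed by bounding each $[g_i'\circ G_{i+1}]^{(k_i)}$ via the same lemma applied to the shorter chain $(g_i',g_{i+1},\dots,g_n)$, the exponents being absorbed by $k+k^k(k+1)\le(k+1)^{k+1}$. You instead reduce to $x=1$ by monotonicity, peel off one composition at a time by summing over set partitions of $\{1,\dots,k\}$, and recover the factor of $n$ by iterating the linear recursion $u_i\le M u_{i+1}+D_i$ down from $u_n\le K$. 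Your two combinatorial inputs are correct ($\sum_{B\in\pi}|B|^{|B|}\le(k-1)^{k-1}+1$ when $|\pi|\ge2$, and $B_k\le k^k$), and the slack $k^k-(k-1)^{k-1}-2$ is indeed large enough to absorb the partition count, so the exponent budget closes; your route even handles explicitly the multinomial coefficients that the paper's Leibniz expansion silently drops.

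One point needs repair. Your induction hypothesis $G_i^{(k)}(1)\le K^{k^k}M^{k(n-i+1)}(n-i+1)^{k^k}$ does not close at the step from $i=2$ to $i=1$: since $g_1$ is the one factor with $g_1(1)\ne1$, its derivative $g_1'(1)$ is controlled only by $K$, not by $M$, so the single-block term contributes $g_1'(1)\,G_2^{(k)}(1)\le K\cdot K^{k^k}M^{k(n-1)}(n-1)^{k^k}$ and the exponent of $K$ overflows the target $K^{k^k}$ (there is no spare factor of $M$ or of $n/(n-1)$ that can absorb an arbitrarily large $K$). The fix is standard: prove the sharper bound with exponent $(k-1)^{k-1}+2$ in place of $k^k$ on $K$ for all $i\ge2$ (which is what your recursion actually yields), and perform the final composition with $g_1$ as a separate last step, which adds only one more power of $K$. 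With that strengthening of the induction hypothesis the bookkeeping goes through and your proof is complete.
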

\begin{proof}
This result can be proved by induction. Indeed,

\Bea
G_1^{(k+1)} &=&[\Pi_{i=1}^n g_i'\circ G_{i+1}]^{(k)} \\
&=&\sum_{k_1+...+k_n=k} \Pi_{i=1}^n [g_i'\circ G_{i+1}]^{(k_i)}.
\Eea
Then, noting that $\#\{i\in [1,n] : k_i>0\}\leq k$ and $\#\{k_i : k_1+...+k_n=k\}\leq n^k$, for every $x\in [0,1]$,
$$ G_1^{(k+1)}(x) \leq n^k \max_{\substack{1\leq i\leq n \\ 0\leq k_i\leq k }} \{1,[g_i'\circ G_{i+1}]^{(k_i)} (x) \}^k.\max(1,g'_1(G_{2}(x))).\max_{2\leq i \leq n}(1,g'_i(1))^{n} .$$
So,
$$\sup_{x\in[0,1]} G_1^{(k+1)}(x) \leq n^k \max_{\substack{1\leq i\leq n \\ 0\leq k_i\leq k }} \{1,[g_i'\circ G_{i+1}]^{(k_i)} (x) \}^{k+1}.\max_{2\leq i \leq n}(1,g'_i(1))^{n} .$$
One can complete the induction noting  that $k+k^k(k+1)\leq (k+1)^{k+1}$.
\end{proof}

\medskip

\noindent {\bf Acknowledgements:} The authors wish to thank Amaury
Lambert for many interesting discussions at the start of this
project.

\end{document}